\let\origdoublepage\cleardoublepage
\renewcommand{\cleardoublepage}{%
  \clearpage{\pagestyle{empty}\origdoublepage}}
\newcommand{\ChapterOutsidePart}{%
   \def\toclevel@chapter{-1}\def\toclevel@section{0}\def\toclevel@subsection{1}}
\newcommand{\ChapterInsidePart}{%
   \def\toclevel@chapter{0}\def\toclevel@section{1}\def\toclevel@subsection{2}}
\newtheorem{theorem}{Theorem}
\newtheorem{lemma}[theorem]{Lemma}
\newtheorem{proposition}[theorem]{Proposition}
\newtheorem{assumption}[theorem]{Hypothesis}
\newtheorem{remark}[theorem]{Remark}
\newproof{proof}{Proof}
\newcommand{\R}{\mathbb{R}}
\newcommand{\N}{\mathbb{N}}
\newcommand{\Rnd}{\R^{n \times d}}
\newcommand{\NN}{{\mathcal{N}}}
\newcommand{\KK}{{\mathcal{K}}}
\newcommand{\EE}{\mathcal{E}}
\newcommand{\UU}{\mathcal{U}}
\renewcommand{\SS}{\mathcal{S}}
\newcommand{\car}[1]{\chiup\left({#1}\right)} 
\newcommand{\ie}{{\it i.e. }}
\newcommand{\eg}{{\it e.g. }}
\newcommand{\uak}{\alpha_k}
\newcommand{\tak}[1]{\tilde{\tau}_{#1}}
\newcommand{\ps}[2]{\langle #1|#2 \rangle}
\newcommand{\ssum}{\displaystyle\sum}
\newcommand{\pprod}{\displaystyle\prod}
\newcommand{\mmin}{\displaystyle\min}
\newcommand{\mmax}{\displaystyle\max}
\newcommand{\taa}{\tilde{a}}
\newcommand{\tA}{\tilde{A}}
\newcommand{\aaa}{\alpha}
\newcommand{\tE}{\tilde{\EE}}
\newcommand{\zeroDmun}{\{0,\ldots,D-1\}}
\newcommand{\Diam}{\Delta_\NN}
\newcommand{\tT}{\tilde{T}}
\newcommand{\tc}{\tilde{c}}
\newcommand{\cor}{\ifmmode coeur \else c\oe{}ur\fi\xspace}
\newcommand{\btau}{{\bf \tauup}}
\newcommand{\tbtau}{{\bf \tilde{\tauup}}}
\newcommand{\deriv}[2]{\frac{\partial #1}{\partial #2}}
\let\overfence\overbrace 
\let\downfencefill\downbracefill 
\patchcmd{\overfence}{\downbracefill}{\downfencefill}{}{}
\patchcmd{\downfencefill}{\braceru \bracelu}{}{}{}
\begin{document}

\pagestyle{empty} 


\title{Multi-agent flocking under topological interactions}

\author[add1]{Samuel Martin}
\ead{samuel.martin@univ-lorraine.fr}

\address[add1]{S. Martin was with the Laboratoire Jean Kuntzmann, Universit\'e de Grenoble,
        B.P. 53, 38041 Grenoble Cedex 9, France during the work. S. Martin is now with the Centre de Recherche en Automatique de Nancy, Universit\'e de Lorraine,
        2 av de la for\^et de Haye, 54501 Vandoeuvre-l\`es-Nancy, France.This research has been partially supported by the ANR (project VEDECY).}

\maketitle



\pagestyle{empty} 

\begin{abstract}
In this paper, we consider a multi-agent system consisting of mobile agents with second-order dynamics.
The communication network is determined by the so-called topological interaction rule: agents interact with a fixed number of their closest neighbors. This rule comes from observations of real flock of starlings. The non-symmetry of the interactions adds to the difficulty of the analysis.
The goal of this paper is to determine practical conditions (on the initial positions and velocities of agents) ensuring that
the agents asymptotically agree on a common velocity (\ie  a flocking behavior is achieved).
For this purpose, we define a notion of hierarchical structure in the interaction graph which allows to establish such conditions, building upon previous work on multi-agent systems with switching communication networks. Though conservative, our approach gives conditions that can be verified a priori. Our result is illustrated through simulations. 
\end{abstract}

\section{Introduction}

Analysis and design of cooperative behaviors in networked dynamic systems has lately received a lot of attention.
Multi-agent systems find applications in technical areas such as mobile sensor networks~\cite{Curtin2009}, cooperative robotics~\cite{Cortes2004}
or distributed implementation of algorithms~\cite{Tsitsiklis1986}.
A central question arising in the study of multi-agent systems is whether the group will be able to reach a consensus. Intuitively, agents are said to reach a consensus 
when all individuals agree on a common value (e.g. the heading direction of a flock of birds, the candidate to elect for voters). 

To carry out formal studies on consensus problems, one usually assumes that the multi-agent system follows some abstract communication protocol and then investigates conditions under which a consensus will be reached. Existing frameworks include discrete and continuous-time systems, involving or neglecting time-delays in the communication process. The communication network between agents is usually modeled by a graph. Its topology is either assumed to be fixed, or can switch over time. The switching topology of the interactions is sometimes assumed to depend on the state of the agents (e.g. the strength of the communication can be a function of the distance between agents). The order of the dynamics of the agents also varies between the different models. For example, second-order models can be useful to represent the dynamics of both the speed and position of agents. Olfati-Saber, Fax and Murray review results on the subject in~\cite{Saber2007}.

Most papers have investigated sufficient conditions ensuring asymptotic consensus.  The assumptions made in the models are usually rather general (see \eg\cite{Moreau2005}). This enables the given conditions to apply in a wide range of cases. Conditions usually require invariant connectivity properties in the communication network over time. A drawback in such conditions is that they often cannot be verified a priori. Our approach differs since we consider a group of agents with second-order dynamics where communication between agents depends on their state. The goal of this paper is to determine practical conditions (on the initial positions and velocities of agents) ensuring that
the agents eventually agree on a common velocity (\ie  a flocking behavior is achieved).

In most of the literature on flocking, \eg~\cite{Reynolds1987,Vicsek1995,Jadbabaie2003,Saber2007,CuckerSmale2007TAC}, researchers have assumed symmetric interactions. Agents interact within a certain communication radius~\cite{Martin2010, MartinFazeli2012, Martin2013}. Such symmetric interactions ease the analysis of the system. However, a recent field study of a starling flock suggested that so-called topological interactions - agents interact with a fixed number of their closest neighbors - reproduce more accurately the collective behaviors observed in nature~\cite{Ballerini2008}. Such interactions also increase the robustness of the flock against predator attacks. Our aim is to formally analyze the consequence of these interactions on velocity alignment. The non-symmetry of this type of interactions adds to the difficulty of the analysis. To tackle the issue, we define a notion of hierarchical structure in the interaction graph, building upon previous work such as~\cite{Angeli2009,Moreau2004}, and then study the robustness of such structure adapting ideas from~\cite{Martin2010}. This allows us to establish new practical conditions for flocking.
Though conservative, our approach gives conditions that can be verified a priori. Moreover, it is computationally tractable and can be fully automated. Our result is illustrated through simulations. 

\subsection{Problem Formulation}\label{sec:pb-form}

In this paper, we study a continuous-time, multi-agent system. We consider a set $\NN=\{1,\dots,n\}$ of mobile {\it agents}  evolving in a $d$-dimensional space. Each agent $i\in \NN$ is characterized by its position $x_i(t)\in \R^d$ and its velocity $v_i(t)\in \R^d$.
The initial positions and velocities are given by $x_i(0)=x_i^0$ and $v_i(0)=v_i^0$. The agents exchange information over a {\it communication network} given by a graph $G(t)=(\NN,\EE(t))$; the topology of the communication network depends on the relative position of agents and is therefore subject to change.
The agents use the available information to adapt their velocity in order to achieve a flocking behavior. 
Formally, the evolution of each agent $i\in \NN$ is described by the following system of differential equations:
\begin{equation}\label{sys:alignement-en-vitesse-topo}
\begin{array}{l}
\dot{x_i}(t) = v_i(t), \\
\dot{v_i}(t) = \ssum_{j=1}^n a_{ij}(t) (v_j(t) - v_i(t)),
\end{array}
\end{equation}
where $a_{ij}(t)=1$ if $j$ belongs to the $m$ closest neighbors of $i$ at time $t$, \ie
$$
a_{ij} = \car{\left|\left\{\; k\in \NN \;| \; \|x_i-x_j\| > \| x_i - x_k \| \; \right\}\right| < m} \; ,
$$
where $m$ is a constant parameter depending on the model, and $\car{A}=1$ if statement $A$ is true and $0$ otherwise.

In this system, the weights $a_{ij}$ depend on the distance $\|x_i-x_j\|$ compared to the other distances $\|x_i-x_k\|$ for $k \in \NN \setminus \{i,j\}$. These interactions are generally non-symmetric. Such interactions are termed {\it topological} interaction due to the fact that they depend on topological distance of the graph associated the communication network rather than to Euclidean  distances. Another property of the communication network is that its associated graph is $m$-regular, \ie the in-degree of each agent is constant, equal to $m$. The aim of the present study is to find practical conditions on $x^0_i$ and $v^0_i$ for $i \in \NN$ such that velocity alignment is achieved, \ie there exists some constant velocity $v^*$ such that
$$
\forall i\in \NN, \lim_{t\longrightarrow +\infty} v_i(t) = v^*.
$$

In the rest of the paper, we start by presenting the approach we have used, then we explicit the main result on velocity alignment (flocking) and we end with an illustration of our result through simulations.

\subsection{Approach}\label{sec:approach}

In order to show that the trajectory of the system (\ref{sys:alignement-en-vitesse-topo}) converges toward velocity alignment, we study the robustness of some structure of the interaction graph when agents' positions are subject to disturbances. This enables to show that a {\it spanning tree}~\footnote{A spanning tree in a graph $G=(\NN,\EE)$ is a graph $T=(\NN,\tE)$ such that $\tE \subseteq \EE$ and there exists a node $r\in \NN$ called root of $T$ such that all other nodes $i\in \NN\setminus \{r\}$ are reachable from $r$. A node $i$ is reachable from $r$ if there is a {\it path} $(i_0,\ldots,i_p)$ in $T$ such that $i_0 = r$, $i_p = i$ and for all $k \in \{0,\ldots,p-1\}$, $(i_k,i_{k+1}) \in T$.} is preserved in the interaction graph over time. This property leads to the contraction of the velocities toward consensus.

Precisely, we proceed using the following reasoning:
\begin{description}
\item (i) The preservation of a spanning tree in the interaction graph guarantees the velocity alignment with an exponential rate (Theorem~\ref{th:flocking-topo-contraction}). This rate depends on the hierarchical structure which is induced by the preserved spanning tree (see section~\ref{sec:flocking-topo-structure-hierarchique}).
\item (ii) Using what precedes, an integration of the velocities allows to estimate the disturbance on the agents' distances. A robustness analysis then guarantees the preservation of the spanning tree required to obtained the velocity alignment (Lemma~\ref{lemma:preserv}).
\item (iii) Combining the two previous observations gives a condition on the initial position and velocities under which the system converges toward velocity alignment (Theorem~\ref{th:flocking-topo-main}).
\end{description}

To provide the convergence rate for the velocity alignment (Theorem~\ref{th:flocking-topo-contraction}) we adapt ideas from~\cite{Moreau2004} and~\cite{Angeli2009}. In~\cite{Moreau2004}, Moreau shows that the trajectory of the system converges toward a consensus provided a general connectivity assumption. However, the generality of his result prevented from obtaining a contraction rate which we need here. In~\cite{Angeli2009}, Angeli and Bliman have determined an asymptotic contraction rate for a discrete-time system analogous to the continuous-time system which we analize here. However, this result cannot be directly used in our case : first, we study a continuous-time system, and second and most importantly, our approach is based on a convergence rate valid for all time.

The non-symmetry of the system increases the difficulty of the analysis. As a consequence, obtaining a convergence rate toward velocity alignment in the present setting constitutes the major contribution of the present paper. We detail the derivation to obtain the convergence rate in sections~\ref{sec:flocking-topo-contraction-du-diametre} and~\ref{sec:flocking-topo-preuves}. One reason why the non-symmetry of the system adds to the difficulty of the analysis is that the average velocity $v^*$ is not preserved over time. Thus it is not possible to use the algebraic approach (see for instance~\cite{Martin2010,MartinFazeli2012,Martin2013}) as it has been done
in the symmetric case. In the present non-symmetric case, the function $\|\delta\|^2/2$, with $\delta = v-v^*$ being the velocity disagreement vector, is not a Lyapunov function anymore. Therefore, we have to turn to another Lyapunov function better adapted to non-symmetric interactions: the velocity diameter
$$
\Delta_\NN(t) = \mmax_{i,j \in \NN} \|v_i(t)-v_j(t)\|.
$$
We now present the 3-step approach.

\section{Sufficient conditions for flocking}

\subsection{Hierarchical structure}\label{sec:flocking-topo-structure-hierarchique}

We start by giving the notation required to introduce the hierarchical structure induced by a spanning tree in the interaction graph.
Let us consider a graph $H$ with node set $\NN$ and adjacency matrix $\tA = (\taa_{ij})$. 
We assume that $H$ has a spanning tree. Denote $r$ its root and $D$ its depth. The graph $H$ will play the role of the subgraph we aim at preserving in the interaction graph.

Following the idea from Angeli and Bliman, we define two sequences of $D+1$ subsets of nodes in $\NN$. For $k \in \{0,\ldots,D\}$, $\SS_k$ contains nodes being at distance $k$ of the root $r$ and set $\UU_k$ is the union of $\SS_l$ for $l\le k$ :
\begin{itemize}
\item $\SS_0 = \{r\}$,
\item $\forall k \in \{0,\ldots,D\}, \UU_k = \displaystyle\cup_{l=0}^k \SS_l$,
\item $\forall k \in \{0,\ldots,D-1\}, \SS_{k+1} = \NN^+_{\SS_k} \backslash \UU_k$,
\end{itemize}
where $\NN^+_{S_k}$ is the set of out-neighbors of the node set $\SS_k$ in the graph $H$, \ie
$$\NN^+_{S_k} = \{i\in \NN | \exists j \in S_k, \taa_{ij} = 1 \}.$$
As defined, sequence $(\SS_k)_{k\in \{0,\ldots,D\}}$ is a partition of node set $\NN$ and $(\UU_k)_{k\in \{0,\ldots,D\}}$ is an increasing family satisfying $\UU_{D} = \NN$. For $k \in \{1,\ldots,D-1\}$, we lower bound the sum of interaction weights from $\UU_k$ to $ \UU_k$ in $H$:
\begin{equation}
\uak = \displaystyle \min_{i \in \UU_{k+1}} \displaystyle\sum_{j\in \UU_k} \taa_{ij}.
\end{equation}
We give an example of hierarchical structure for a topological communication network in Figure~\ref{fig:structure-hierarchique}.

\begin{figure}[!htbp]
\begin{center}
\includegraphics[scale=0.6,trim=4cm 9cm 4cm 9cm,keepaspectratio]{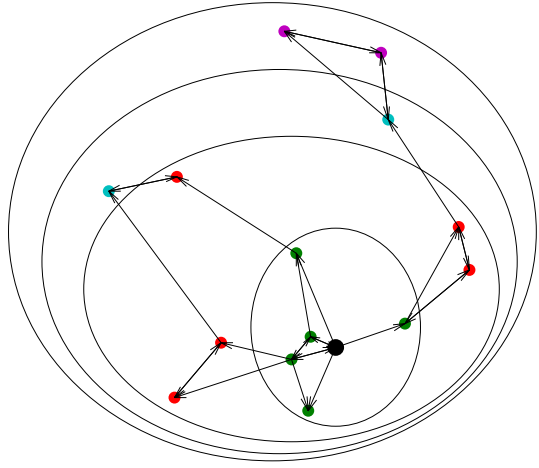}
\end{center}
\caption{\label{fig:structure-hierarchique} Color dots represent agents' positions. Arrows represent influences according to topological interactions for $m=2$. The bigger black dot is the root $r$. Black circles correspond to the increasing sequence of sets $\UU_k$ for $k \in \{1,\ldots,D\}$. Set $\UU_0 = \{r\}$ is not displayed. In the present case, $D = 4$ and $\alpha_1 = \alpha_2 = \alpha_3 = 1$.}
\end{figure}

This notation will serve to present our main contribution. To do so, we also need to define the following functions of $\R^+$ :
$$
c_0 = \tau\longmapsto \frac{1}{m+1} (1 - e^{-(m+1)\tau}) 
$$
and for $k \in \{1,\ldots,D-1\}$,
$$
c_k = \tau\longmapsto e^{-m \tau}(e^{\uak\tau}-1).
$$
This allows us to define functions $c : \R^D\longrightarrow \R^+$ and $T : \R^D\longrightarrow \R^+$ such that
$$
c = \btau \longmapsto \pprod_{k=0}^{D-1}c_k(\tau_k) \;\; \text{ and } \;\; T = \btau \longmapsto \ssum_{k=0}^{D-1}\tau_{k},
$$
where $\btau = (\tau_0,\ldots,\tau_{D-1})$. We choose a sequence $\tbtau = (\tak{0},\tak{1},\ldots,\tak{D-1})$ of non-negative real numbers which maximizes ratio $c/T$. If such a sequence is not unique, we choose one arbitrarily. Notice that such a sequence always exists since the function to be optimized is continuous over $(\R^+)^D$ and has a $0$ limit when $\|\btau\|$ goes to $+\infty$ (which shows that the function is bounded). To finish with, we define
\begin{equation}\label{eq:tildeT}
\tT = T(\tbtau) \;\; \text{ and } \;\;\tc = c(\tbtau),
\end{equation}
so that ratio $\tc/\tT$ is the maximal value of ratio function $c/T$. The reason why we define these notation will become clear in section~\ref{sec:flocking-topo-contraction-du-diametre}. Moreover, we give a method in section~\ref{sec:flocking-topo-optimisation-de-la-borne} to compute quantities $\tc$ and $\tT$.

Consider an interaction graph $G$ of adjacency matrix $A = (a_{ij})$.
The robustness analysis in section~\ref{sec:flocking-topo-robustesse} aims at obtaining the following hypothesis:
\begin{assumption}[Preservation of the hierarchical structure]\label{as:alpha-preserv}
The interaction graph $G$ satisfies the two following properties:
\begin{itemize}
\item for $i \in \UU_{1}$, $a_{ir} = 1$.
\item for $k \in \{1,\ldots,D-1\}$ and $i \in \UU_{k+1}$,
$$
\ssum_{j\in \UU_k} a_{ij} \; \ge \; \uak \;\; \text{ and } \;\; \ssum_{j\notin \UU_k} a_{ij} \; \le \; (m-\uak).
$$
\end{itemize}
\end{assumption}

In order to obtain the previous hypothesis, we have the following proposition:
\begin{proposition}\label{prop:topo-alphak}
Let $G$ be a $m$-regular interaction graph having the spanning tree $H$ as described above. Then, hypothesis~\ref{as:alpha-preserv} is satisfied.
\end{proposition}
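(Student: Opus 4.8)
The plan is to extract from the two hypotheses the only facts about $G$ that are actually used, and then to verify each of the three claims of Hypothesis~\ref{as:alpha-preserv} by elementary counting. The first fact is that $G$ contains $H$ as a subgraph: reading ``$G$ has the spanning tree $H$'' as edge inclusion on the common node set $\NN$, we get $\taa_{ij}=1 \Rightarrow a_{ij}=1$, hence $a_{ij}\ge\taa_{ij}$ entrywise since both weights lie in $\{0,1\}$. The second fact is the $m$-regularity of $G$, i.e. $\sum_{j\in\NN}a_{ij}=m$ for every $i\in\NN$; I would also record that the topological rule always yields $a_{ii}=1$, because $\|x_i-x_i\|=0$ forces $i$ to be counted among its own $m$ closest neighbours.

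First I would treat the bullet concerning $\UU_1=\SS_0\cup\SS_1$. For the root itself, $a_{rr}=1$ by the self-interaction remark above. For $i\in\SS_1=\NN^+_{\SS_0}\setminus\UU_0$, the definition of the out-neighbour set gives $\taa_{ir}=1$, hence $a_{ir}=1$ by edge inclusion. This settles $a_{ir}=1$ for all $i\in\UU_1$.

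Next I would prove the two inequalities of the second bullet. Fix $k\in\{1,\ldots,D-1\}$ and $i\in\UU_{k+1}$. Edge inclusion gives $\sum_{j\in\UU_k}a_{ij}\ge\sum_{j\in\UU_k}\taa_{ij}$, and since $i\in\UU_{k+1}$ the right-hand side is at least the minimum defining $\uak$, so $\sum_{j\in\UU_k}a_{ij}\ge\uak$. The complementary bound is then immediate from $m$-regularity: writing the constant row-sum as $m=\sum_{j\in\UU_k}a_{ij}+\sum_{j\notin\UU_k}a_{ij}$ and substituting the bound just obtained yields $\sum_{j\notin\UU_k}a_{ij}=m-\sum_{j\in\UU_k}a_{ij}\le m-\uak$.

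The argument is essentially bookkeeping, so the only genuine obstacle is conceptual rather than computational: pinning down precisely what ``$G$ has the spanning tree $H$'' encodes and in which direction the adjacency weights point, so that the defining minimum for $\uak$ (ranging over $i\in\UU_{k+1}$) matches exactly the index set over which the lower bound is asserted. Once edge inclusion and $m$-regularity are isolated, both inequalities of the second bullet are two lines each, and the first bullet follows directly from the construction of $\SS_1$ as the out-neighbourhood of the root together with the self-interaction $a_{rr}=1$.
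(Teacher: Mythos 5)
Your proof is correct and follows exactly the route the paper indicates: the paper dispatches this proposition in one sentence, citing the definition of $\uak$, the $m$-regularity of $G$, and the inclusion $H\subseteq G$, which are precisely the three facts you isolate and then combine by counting. Your extra observation that the topological rule forces $a_{ii}=1$ (needed so that the first bullet holds for $i=r\in\UU_1$) is a detail the paper glosses over but which is indeed consistent with its definition of $a_{ij}$.
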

This proposition can be deduced directly from the definition of $\uak$, from $G$ being $m$-regular and from $H \subseteq G$.

\subsection{Main result}\label{sec:flocking-topo-resultat-principal}

We consider a quantity $\rho \in [0,\Delta_\NN(0)]$ which represents the maximal disturbance authorized on the distances between agents (see section~\ref{sec:flocking-topo-robustesse}). We defined subgraph $H_\rho = (\NN,\tE,\tA)\subseteq G_{x^0}$ where $\tA = (\taa_{ij})$ with
\begin{equation}\label{eq:H-interaction-topo-perturbee}
\taa_{ij} = \car{\left| \left\{\; k\in \NN \;|\; \|x_i-x_j\| > \| x_i - x_k \|-2\rho \; \right\}\right| < m }.
\end{equation}
Graph $H_\rho$ corresponds to the subgraph of $G_{x^0}$ which is always preserved for any disturbance on the distances between agents provided that the disturbance remains smaller than $\rho$ (see lemma~\ref{lemma:preserv}).
In our result on velocity alignment, we assume that $H_\rho$ has a spanning tree and thus, {\it a fortiori}, so has $G_{x^0}$. We detail in section~\ref{sec:flocking-topo-optimisation-de-la-borne} how to chose $\rho$ so that this property is satisfied. We can use the hierarchical structure notation given in section~\ref{sec:flocking-topo-structure-hierarchique} by setting $H := H_\rho$. Under this condition, the following result is satisfied.

\begin{theorem}\label{th:flocking-topo-main}
Let $x^0=(x^0_1,\dots,x^0_n) \in \R^{nd}$ and $v^0=(v^0_1,\dots,v^0_n) \in \R^{nd}$ be the stacked vectors of positions and velocities, respectively. Let $\rho\in [0,\Diam(0)]$ so that $H_\rho$ has a spanning tree. Assume that the initial velocity diameter satisfies
\begin{equation}
\Delta_\NN(0) \leq \frac{\tilde{c}}{\tilde{T}} \rho.
\end{equation}
where $\tilde{c}$ and $\tilde{T}$ are defined using equation (\ref{eq:tildeT}).
Then, for all trajectory $(x(t),v(t))$ of system (\ref{sys:alignement-en-vitesse-topo}) defined over $\R^+$, $H_\rho$ is preserved in interaction graph $G(t)$ for all time $t\ge 0$ and all agents asymptotically converge toward velocity alignment.
\end{theorem}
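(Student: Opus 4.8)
The plan is to close a bootstrap between the two auxiliary results announced in the approach: the exponential contraction of the velocity diameter granted by a preserved spanning tree (Theorem~\ref{th:flocking-topo-contraction}) and the robustness of $H_\rho$ under bounded distance perturbations (Lemma~\ref{lemma:preserv}). These two facts are mutually dependent --- contraction is only available while $H_\rho$ is preserved, whereas preservation of $H_\rho$ is only guaranteed while the velocities contract --- so the natural device is a continuity (maximal-time) argument. I expect this circularity, and the strictness needed to escape it, to be the only genuine obstacle; everything else is integration.

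First I would record that the velocity diameter $\Delta_\NN$ is nonincreasing along any trajectory of system~(\ref{sys:alignement-en-vitesse-topo}), independently of the graph structure: for any direction $u$ the support function $\mmax_i\langle u,v_i\rangle$ is nonincreasing, since the maximizing agent satisfies $\langle u,\dot v_i\rangle=\ssum_j a_{ij}\langle u,v_j-v_i\rangle\le 0$, so the convex hull of the velocities can only shrink. I would then introduce the supremal preservation time
$$
t^\star=\sup\{\,t\ge 0 \;|\; H_\rho\subseteq G(s)\ \text{for all}\ s\in[0,t]\,\}.
$$
At $t=0$ we have $H_\rho\subseteq G_{x^0}=G(0)$ by construction; since the pairwise distances $\|x_i(t)-x_j(t)\|$ are $C^1$ in $t$ (the velocities being bounded), the accumulated distance drift is small for small $t$, so Lemma~\ref{lemma:preserv} gives $H_\rho\subseteq G(t)$ on a nontrivial initial interval and $t^\star>0$. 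The claim reduces to proving $t^\star=+\infty$, which I would establish by contradiction, assuming $t^\star<+\infty$.

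On $[0,t^\star)$ the graph $G(t)$ is $m$-regular and admits $H_\rho$ as a spanning tree, so Proposition~\ref{prop:topo-alphak} (applied with $H:=H_\rho$) yields Hypothesis~\ref{as:alpha-preserv} and Theorem~\ref{th:flocking-topo-contraction} applies. It provides a per-period decay of the form $\Delta_\NN(t+\tT)\le(1-\tc)\Delta_\NN(t)$; combined with the monotonicity above, the integral of $\Delta_\NN$ over each period is at most $\tT$ times its value at the start of that period, and summing the geometric series gives
$$
\int_0^{t^\star}\Delta_\NN(s)\,ds\ \le\ \frac{\tT}{\tc}\,\Delta_\NN(0).
$$
Integrating $\dot{x_i}=v_i$, each pairwise relative displacement drifts by at most $\int_0^{t^\star}\|v_i(s)-v_j(s)\|\,ds\le\int_0^{t^\star}\Delta_\NN(s)\,ds\le\frac{\tT}{\tc}\Delta_\NN(0)$, and the hypothesis $\Delta_\NN(0)\le\frac{\tc}{\tT}\rho$ forces this drift below $\rho$.

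It remains to apply Lemma~\ref{lemma:preserv}: a perturbation of the pairwise distances bounded by $\rho$ leaves the $2\rho$-robust neighbor set of $H_\rho$ untouched, so $H_\rho\subseteq G(t^\star)$. Moreover, because $t^\star$ is finite only finitely many terms of the geometric sum are consumed, so (for $\Delta_\NN(0)>0$) the accumulated drift at $t^\star$ is in fact \emph{strictly} below $\frac{\tT}{\tc}\Delta_\NN(0)\le\rho$; by continuity of the pairwise distances the drift stays below $\rho$ on a right-neighborhood of $t^\star$, where Lemma~\ref{lemma:preserv} keeps $H_\rho\subseteq G(t)$ --- contradicting the maximality of $t^\star$. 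Hence $t^\star=+\infty$, $H_\rho$ is preserved for all time, and Theorem~\ref{th:flocking-topo-contraction} holds on all of $\R^+$. Finally, $\|\dot{v_i}\|\le m\,\Delta_\NN$ together with $\int_0^{+\infty}\Delta_\NN(s)\,ds<+\infty$ shows each $v_i(t)$ is Cauchy and converges, while $\Delta_\NN(t)\to 0$ forces all limits to coincide with a single $v^\star$, which is the announced velocity alignment. The point requiring care is precisely the passage of the integral estimate from the open interval $[0,t^\star)$ to the closed endpoint, keeping the inequality strict so that Lemma~\ref{lemma:preserv} can be reopened past $t^\star$.
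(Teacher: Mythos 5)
Your proposal is correct and follows essentially the same route as the paper: a maximal-time/contradiction argument combining the contraction rate of Theorem~\ref{th:flocking-topo-contraction} with the robustness Lemma~\ref{lemma:preserv}, the geometric-series bound $\int_0^{t^\star}\Delta_\NN \le \frac{\tT}{\tc}\Delta_\NN(0)\le\rho$, and the strictness of the truncated sum plus continuity to push past $t^\star$. The only (welcome) additions are the explicit monotonicity of $\Delta_\NN$ and the Cauchy argument identifying the limit velocity, which the paper leaves implicit.
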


This theorem shows that the bigger the authorized disturbance $\rho$, the bigger the authorized initial velocity diameter $\Diam(0)$ is. However, the ratio $\frac{\tilde{c}}{\tilde{T}}$ is a non-increasing function of $\rho$ as we shall show in section~\ref{sec:flocking-topo-optimisation-de-la-borne}. We will discuss in this section how to choose $\rho$ so as to optimize the bound $\frac{\tilde{c}\rho}{\tilde{T}}$. The proof of the theorem requires several intermediate results, for instance to characterize the convergence rate of the diameter (see theorem~\ref{th:flocking-topo-contraction}). Consequently, we transfer the proof to the end of section~\ref{sec:flocking-topo-preuves}.

\subsection{Contraction of the diameter}\label{sec:flocking-topo-contraction-du-diametre}

In this section, we show that the preservation of a hierarchical structure as detailed in section~\ref{sec:flocking-topo-structure-hierarchique} allows to bound below the contraction rate of the diameter.

Consider the following consensus system:
\begin{equation}\label{sys:consensus-v}
	\dot{v_i} = \ssum_{j=1}^n a_{ij} (v_j - v_i), i\in \NN,
\end{equation}
where $v_i \in \R^d$, for $j \in \NN$, $a_{ij} \in \{0,1\}$ and $\sum_{j\in \NN} a_{ij} = m$ is a given constant parameter. The idea which will be used to analyze this system are adapted from~\cite{Moreau2004} and~\cite{Angeli2009}. The contraction of the velocity diameter requires the preservation of the hierarchical structure of the interaction graph. 
We use notation $r$, $D$, $\SS_k$, $\UU_k$ and $\aaa_k$, etc. as defined in section~\ref{sec:flocking-topo-structure-hierarchique}. 
For a subset $S\subseteq \NN$, denote
$$
\Delta_S(t) = \mmax_{i,j \in S} \|v_i(t) - v_j(t)\|
$$
the velocity diameter of $S$ at time $t\ge 0$.
Then, we introduce the lemma which allows to characterize $\Delta_{\UU_{k+1}}$, diameter of $\UU_{k+1}$, in function of $\Delta_{\UU_{k}}$, diameter of $\UU_{k}$, for $k \in \zeroDmun$. An induction on this lemma will allow us to obtain a contraction rate for $\Delta_\NN$.

\begin{lemma}\label{l:deltauk1ttau-principal}
Let $k$ in $\zeroDmun$. Let times $t_0$ and $t$ such that $t_0<t$ and $\tau \geq 0$. Assume that the interaction graph $G(s)$ satisfies hypothesis~\ref{as:alpha-preserv} for $s \in [t,t+\tau]$. Then, we have
\begin{equation}\label{eq:rec}
\Delta_{\UU_{k+1}}(t+\tau) \leq \Delta_\NN(t_0) - c_k(\tau) (\Delta_{\NN}(t_0) - \Delta_{\UU_k}(t))
\end{equation}
where the $c_k(\tau)$ were defined in section~\ref{sec:flocking-topo-structure-hierarchique}.
\end{lemma}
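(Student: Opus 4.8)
The plan is to recast the flow of (\ref{sys:consensus-v}) as a convex combination and to reduce the whole statement to a single lower bound on the weight that each agent of $\UU_{k+1}$ places on $\UU_k$. First I would write the (time-varying) system as $\dot v=-(mI-A(s))v$ and introduce its transition matrix $\Phi(s,t)$, so that $v_i(t+\tau)=\ssum_{j=1}^n \Phi_{ij}(t+\tau,t)\,v_j(t)$. Setting $\Psi(s)=e^{m(s-t)}\Phi(s,t)$ gives $\dot\Psi=A(s)\Psi$ with $\Psi(t)=I$, and since $A(s)\ge 0$ the Peano--Baker series shows $\Psi\ge 0$ entrywise; as the row sums of $A(s)-mI$ vanish, $\Phi$ is row-stochastic. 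Hence each $v_i(t+\tau)$ is a convex combination of the $v_j(t)$, and the same convexity shows that $\Delta_\NN$ is non-increasing, so $\Delta_\NN(t)\le \Delta_\NN(t_0)$ for $t_0<t$.

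The heart of the proof is the estimate
\begin{equation*}
\ssum_{j\in\UU_k}\Phi_{ij}(t+\tau,t)\ \ge\ c_k(\tau)\qquad\text{for every } i\in\UU_{k+1}.
\end{equation*}
I would obtain it from the non-negative expansion $\Psi(t+\tau)=\ssum_{p\ge 0}\int_{t\le s_1\le\cdots\le s_p\le t+\tau}A(s_p)\cdots A(s_1)\,ds_1\cdots ds_p$ by retaining only the walks that remain inside $\UU_k$ from the first hop onward. The key point is that Hypothesis~\ref{as:alpha-preserv} holds on all of $[t,t+\tau]$ and applies to every node of $\UU_{k+1}$, hence to every node of $\UU_k\subseteq\UU_{k+1}$ as well; therefore at each factor the weight directed into $\UU_k$ is at least $\alpha_k$ at every instant. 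Summing from the inner end, this yields $\ssum_{j\in\UU_k}[A(s_p)\cdots A(s_1)]_{ij}\ge \alpha_k^{\,p}$, so after integration and the prefactor $e^{-m\tau}$ the $p$-th term contributes $e^{-m\tau}(\alpha_k\tau)^p/p!$, which sums to $e^{-m\tau}(e^{\alpha_k\tau}-1)=c_k(\tau)$ for $k\ge 1$.

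Finally I would convert this mass bound into a diameter bound by a coupling/convexity argument. For $i,i'\in\UU_{k+1}$ I view $(\Phi_{ij})_j$ and $(\Phi_{i'j})_j$ as probability vectors; since each assigns at least $c_k(\tau)$ mass to $\UU_k$, there is a coupling of the two draws $J,J'$ landing both inside $\UU_k$ with probability at least $c_k(\tau)$. Writing $v_i(t+\tau)-v_{i'}(t+\tau)=\E[v_J-v_{J'}]$ and bounding $\|v_J-v_{J'}\|$ by $\Delta_{\UU_k}(t)$ on that event and by $\Delta_\NN(t)\le\Delta_\NN(t_0)$ otherwise yields
\begin{equation*}
\|v_i(t+\tau)-v_{i'}(t+\tau)\|\ \le\ c_k(\tau)\,\Delta_{\UU_k}(t)+(1-c_k(\tau))\,\Delta_\NN(t_0),
\end{equation*}
which is exactly (\ref{eq:rec}) after maximizing over $i,i'\in\UU_{k+1}$ and rearranging.

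The main obstacle is the weight estimate together with the base case. The delicate combinatorial point is handling the time-variation of $A(s)$ inside the Peano--Baker series, that is, verifying that \emph{staying in} $\UU_k$ retains weight $\ge\alpha_k$ at every instant rather than merely on average. For $k=0$ the set $\UU_0=\{r\}$ is a single node, so $\Delta_{\UU_0}=0$ and the scheme goes through with the weight on the root; however the sharper constant $c_0$, whose rate $m+1$ blends the self-decay $m$ with the guaranteed attraction toward $r$, is not delivered by the crude walk count and demands a dedicated refinement of the estimate for the root's neighborhood, which I expect to be the most technical step.
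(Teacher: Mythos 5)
Your argument for $k\ge 1$ is correct and takes a genuinely different route from the paper. The paper never introduces the transition matrix: it derives two differential inequalities (Lemma~\ref{l:dotdeltaUk1s} bounds $\dot{\Delta}_{\UU_{k+1}}(s)$ by an $\uak$-term and an $(m-\uak)$-term via the scalar-product identity $\ps{\dot{v_i}-\dot{v_j}}{v_i-v_j}$ and Lemma~\ref{lemma-sep}; Lemma~\ref{l:deltaUk} controls the unknown $\Delta_{\UU_k}(s)$ for $s>t$), substitutes one into the other and integrates. Your Peano--Baker walk count plus the coupling step reproduces exactly the same constant $e^{-m\tau}(e^{\uak\tau}-1)$ for $k\ge 1$, and your observation that intermediate nodes of the retained walks lie in $\UU_k\subseteq\UU_{k+1}$, so that Hypothesis~\ref{as:alpha-preserv} applies at every factor, is precisely the point that makes the estimate go through. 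This is a clean and arguably more transparent derivation of that half of the lemma.

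The gap is the case $k=0$, and it is not a technicality you can defer: your mass bound yields only $e^{-m\tau}(e^{\tau}-1)$, whereas the lemma asserts the inequality with $c_0(\tau)=\frac{1}{m+1}(1-e^{-(m+1)\tau})$, and for $m\ge 2$ one has $e^{-m\tau}(e^{\tau}-1)<c_0(\tau)$ (already at order $\tau^2$ near $0$, and glaringly as $\tau\to+\infty$ where your bound vanishes while $c_0\to\frac{1}{m+1}$). Since $\tc$ and $\tT$, hence the threshold in Theorem~\ref{th:flocking-topo-main}, are built from these specific $c_k$, proving the statement with a smaller constant does not prove the lemma. The mechanism behind $c_0$ is qualitatively different from a ``mass on $\UU_0$'' estimate: for the extremal pair $i,j\in\UU_1$ the paper uses $a_{ir}=a_{jr}=1$ to produce the cross term $\ps{v_r-v_i}{v_i-v_j}+\ps{v_r-v_j}{v_j-v_i}=-\|v_i-v_j\|^2$, i.e.\ a genuine mutual contraction at rate $1$ on top of the ambient rate $m$, giving the decay exponent $m+1$ in $\dot{\Delta}_{\UU_1}(s)\le-\Delta_{\UU_1}(s)+m(\Delta_\NN(t_0)-\Delta_{\UU_1}(s))$. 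In your language this is a lower bound on the \emph{overlap} $\ssum_{j}\min(\Phi_{ij},\Phi_{i'j})$ (a Dobrushin-type coefficient exploiting that both rows load the same single node $r$), not on the marginal mass each row puts on $\UU_0$; your coupling on the event $\{J,J'\in\UU_0\}$ is the right shape, but you have not established that this event can be given probability $c_0(\tau)$. Until that overlap estimate is supplied, the induction in Theorem~\ref{th:flocking-topo-contraction} cannot be started with the claimed constants.
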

This lemma is the core of our result. Its proof is given in section~\ref{sec:flocking-topo-preuves}. In order to give the main result of the section, we make use of the sequence of time intervals $(\tak{0},\tak{1},\ldots,\tak{D-1})$ as well as notation $\tT$ et $\tc$ as defined in section~\ref{sec:flocking-topo-structure-hierarchique} :
\begin{theorem}[Contraction of the velocity diameter]\label{th:flocking-topo-contraction}
Consider $v$ a trajectory of system (\ref{sys:consensus-v}) defined on $\R^+$ and assume that $G(t)$ satisfies hypothesis~\ref{as:alpha-preserv} (preservation of the hierarchical structure) over time interval $[0,Q\tT]$ for $Q \in \N$. Then, for all $q \in \{0,\ldots,Q \}$ 
we have
\begin{equation}\label{eq-gen-bound}
	\Delta_\NN(q\tT) \leq (1-\tc)^q \Delta(0).
\end{equation}
\end{theorem}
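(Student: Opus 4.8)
The plan is to derive the bound \eqref{eq-gen-bound} by first establishing the single-period contraction $\Delta_\NN(\tT) \leq (1-\tc)\Delta_\NN(0)$, and then iterating this estimate over the $Q$ successive intervals $[q\tT,(q+1)\tT]$ using the fact that the hypothesis holds throughout $[0,Q\tT]$. The core of the single-period argument is a finite induction on $k \in \{0,\ldots,D-1\}$ built on Lemma~\ref{l:deltauk1ttau-principal}, chaining the diameters of the nested sets $\UU_0 \subseteq \UU_1 \subseteq \cdots \subseteq \UU_D = \NN$ across the consecutive subintervals of lengths $\tak{0},\ldots,\tak{D-1}$ whose total length is exactly $\tT$.

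Concretely, for the first period I would set $t_0 = 0$ and introduce the partial times $\sigma_k = \sum_{l=0}^{k-1}\tak{l}$ for $k \in \{0,\ldots,D\}$, so that $\sigma_0 = 0$ and $\sigma_D = \tT$. Applying Lemma~\ref{l:deltauk1ttau-principal} on the interval $[\sigma_k,\sigma_k+\tak{k}]$ with the single reference time $t_0 = 0$ yields, for each $k$,
$$
\Delta_{\UU_{k+1}}(\sigma_{k+1}) \leq \Delta_\NN(0) - c_k(\tak{k})\bigl(\Delta_\NN(0) - \Delta_{\UU_k}(\sigma_k)\bigr).
$$
The key quantity to track is the normalized gap $g_k = \bigl(\Delta_\NN(0) - \Delta_{\UU_k}(\sigma_k)\bigr)/\Delta_\NN(0)$, which lies in $[0,1]$ and starts at $g_0 = 0$ since $\UU_0 = \{r\}$ forces $\Delta_{\UU_0} = 0$. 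The displayed inequality rewrites cleanly as the multiplicative recursion $g_{k+1} \geq c_k(\tak{k})\,g_{k}^{\mathrm{c}}$, or more directly as $\Delta_\NN(0)-\Delta_{\UU_{k+1}}(\sigma_{k+1}) \geq c_k(\tak{k})\bigl(\Delta_\NN(0)-\Delta_{\UU_k}(\sigma_k)\bigr)$; iterating from $k=0$ to $k=D-1$ and using $\Delta_{\UU_0}=0$ telescopes into
$$
\Delta_\NN(0) - \Delta_\NN(\tT) \;\geq\; \Bigl(\prod_{k=0}^{D-1} c_k(\tak{k})\Bigr)\,\Delta_\NN(0) \;=\; \tc\,\Delta_\NN(0),
$$
which is precisely $\Delta_\NN(\tT) \leq (1-\tc)\Delta_\NN(0)$ by the definition \eqref{eq:tildeT} of $\tc$.

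To pass from one period to $Q$ periods, I would run the identical argument on each interval $[q\tT,(q+1)\tT]$, replacing the reference time $0$ by $q\tT$; since hypothesis~\ref{as:alpha-preserv} is assumed over all of $[0,Q\tT]$, Lemma~\ref{l:deltauk1ttau-principal} applies on every subinterval, giving $\Delta_\NN((q+1)\tT) \leq (1-\tc)\Delta_\NN(q\tT)$. A straightforward induction on $q$ then produces \eqref{eq-gen-bound}. I anticipate that the main obstacle is bookkeeping the two distinct time scales in Lemma~\ref{l:deltauk1ttau-principal}: the lemma allows an \emph{arbitrary} anchor $t_0 < t$ while comparing $\Delta_{\UU_{k+1}}(t+\tau)$ with $\Delta_{\UU_k}(t)$, so I must be careful to hold $t_0 = q\tT$ fixed across the whole chain of $D$ applications within a single period (rather than letting it advance with $\sigma_k$), since the telescoping relies on every term being measured against the \emph{same} $\Delta_\NN(q\tT)$. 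Verifying that the nested subintervals exactly partition $[q\tT,(q+1)\tT]$ and that the chosen optimizing sequence $\tbtau$ is the one that makes $\prod_k c_k(\tak{k}) = \tc$ is the only other point requiring care, but both follow directly from the definitions in section~\ref{sec:flocking-topo-structure-hierarchique}.
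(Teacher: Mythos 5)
Your proposal is correct and follows essentially the same route as the paper: a finite induction on $k$ via Lemma~\ref{l:deltauk1ttau-principal} with the anchor $t_0$ held fixed at $q\tT$, telescoping the gap $\Delta_\NN(q\tT)-\Delta_{\UU_k}(\sigma_k)$ across the consecutive subintervals of lengths $\tak{0},\ldots,\tak{D-1}$, and then iterating the resulting one-period contraction over $q$. One trivial slip: with your normalization the initial gap is $g_0=1$, not $0$ (since $\Delta_{\UU_0}=0$ makes $\Delta_\NN(0)-\Delta_{\UU_0}(\sigma_0)=\Delta_\NN(0)$); your un-normalized telescoping inequality already uses the correct value, so the conclusion is unaffected.
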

\begin{remark}
As a direct corollary of theorem~\ref{th:flocking-topo-contraction}, we obtain that the velocity diameter decreases exponentially fast toward $0$.
\end{remark}

\begin{proof}
The core of the theorem lies in Lemma~\ref{l:deltauk1ttau-principal} whose proof is given in section~\ref{sec:flocking-topo-preuves}. The present proof is adapted from~\cite{Angeli2009}. We show by induction the following statement: for all $t_0$ in $\R^+$, for $k\in \{1,\ldots,D-1\}$,
\begin{equation*}
P_k \Leftrightarrow \Delta_{\UU_{k+1}}(t_0+\ssum_{h=0}^k\tak{h}) \leq \left(1-\pprod_{h=0}^{k}c_h(\tak{h})\right) \Delta_{\NN}(t_0).
\end{equation*}
First, let us remark that Lemma~\ref{l:deltauk1ttau-principal} applied to $k=0$ and $\tau = \tak{0}$ gives
\begin{equation*}
\Delta_{\UU_{1}}(t_0+\tak{0}) \leq \Delta_\NN(t_0) - c_0(\tak{0}) (\Delta_{\NN}(t_0) - \Delta_{\UU_0}(t_0)).
\end{equation*}
Since $\UU_0=\{r\}$, $\Delta_{\UU_0}=0$ and the previous equation becomes
\begin{equation*}
\Delta_{\UU_{1}}(t_0+\tak{0}) \leq (1 - c_0(\tak{0})) (\Delta_{\NN}(t_0)),
\end{equation*}
which starts the induction.
Now, assume that for $k \in \{1,\ldots,D-2\}$,
\begin{equation*}
\Delta_{\UU_{k}}(t_0+\ssum_{h=0}^{k-1}\tak{h}) \leq \left(1 - \pprod_{h=0}^{k-1}c_h(\tak{h})\right) \Delta_{\NN}(t_0).
\end{equation*}
Once again, using Lemma~\ref{l:deltauk1ttau-principal} applied to $k$ and $\tau = \tak{k}$,
\begin{eqnarray*}
&&\Delta_{\UU_{k+1}}(t_0+\ssum_{h=0}^k\tak{h}) \;\;\leq\;\; \Delta_\NN(t_0) - c_k(\tak{k}) \left(\Delta_{\NN}(t_0) - \Delta_{\UU_k}(t_0+\ssum_{h=0}^{k-1}\tak{h})\right) \\
&&\leq \Delta_\NN(t_0) - c_k(\tak{k}) \left(\Delta_{\NN}(t_0) - (1 - \pprod_{h=0}^{k-1}c_h(\tak{h})) (\Delta_{\NN}(t_0))\right) \\
&&\leq \Delta_\NN(t_0) - c_k(\tak{k}) \left((\pprod_{h=0}^{k-1}c_h(\tak{h})) \Delta_{\NN}(t_0)\right) 
\;\;\leq\;\; \left(1-\pprod_{h=0}^{k}c_h(\tak{h})\right) \Delta_{\NN}(t_0),
\end{eqnarray*}
which ends the proof of the induction. We use the induction statement with $k=D-1$ to obtain
\begin{equation*}
\Delta_\NN(t_0+\tilde{T}) \leq (1-\tilde{c}) \Delta_\NN(t_0).
\end{equation*}
We obtain the result of the theorem by repeating the inequality.
\end{proof}

\subsection{Robustness of the hierarchical structure}\label{sec:flocking-topo-robustesse}

First, we start by showing that if the disturbance applied to the relative distances is not greater than $\rho$ then the subgraph $H_\rho$ defined in section~\ref{sec:flocking-topo-structure-hierarchique} is preserved. We again use notation given in section~\ref{sec:flocking-topo-structure-hierarchique}.
\begin{lemma}[Preservation of the subgraph]\label{lemma:preserv}
Consider a reference position vector $x^0 \in \Rnd$ and a disturbed position vector $y$ satisfying for all $i,j \in \NN$
$$
\| y_j - y_i - (x_j^0 - x_i^0)\| \le \rho.
$$
Then, graph $H_\rho$ defined by equation (\ref{eq:H-interaction-topo-perturbee}) is preserved in the topological interaction graph $G_y$.
\end{lemma}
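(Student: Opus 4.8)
The statement to establish is that $H_\rho\subseteq G_y$, i.e. that every arc of $H_\rho$ survives in the perturbed topological graph: whenever $\taa_{ij}=1$ we must have $a_{ij}=1$. My plan is to control the perturbed distances $\|y_i-y_j\|$ by the reference distances $\|x_i^0-x_j^0\|$, and then compare the two neighbour-counting sets that define $\taa_{ij}$ and $a_{ij}$.

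First I would turn the hypothesis into a two-sided distance bound. Since $(y_i-y_j)-(x_i^0-x_j^0) = -\big((y_j-y_i)-(x_j^0-x_i^0)\big)$, the assumption gives $\|(y_i-y_j)-(x_i^0-x_j^0)\|\le\rho$, and the reverse triangle inequality then yields $\big|\,\|y_i-y_j\|-\|x_i^0-x_j^0\|\,\big|\le\rho$ for all $i,j\in\NN$. Equivalently, $\|x_i^0-x_j^0\|-\rho\le\|y_i-y_j\|\le\|x_i^0-x_j^0\|+\rho$.

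The core step is a set inclusion. Fix $i$ and an arc $(i,j)$ of $H_\rho$, so that by definition (\ref{eq:H-interaction-topo-perturbee}) one has $|\{k\in\NN : \|x_i^0-x_k^0\|<\|x_i^0-x_j^0\|+2\rho\}|<m$. The set counting the neighbours of $i$ that are strictly closer than $j$ in $G_y$ is $\{k : \|y_i-y_k\|<\|y_i-y_j\|\}$, and I claim it is contained in the former set. Indeed, if $\|y_i-y_k\|<\|y_i-y_j\|$ then, chaining the two-sided bound on both the $k$-distance and the $j$-distance, $\|x_i^0-x_k^0\|\le\|y_i-y_k\|+\rho<\|y_i-y_j\|+\rho\le\|x_i^0-x_j^0\|+2\rho$. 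The slack of $2\rho$ — one $\rho$ spent on each of the two edges being compared — is precisely the reason $H_\rho$ is defined with a $-2\rho$ buffer.

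Taking cardinalities, the inclusion gives $|\{k : \|y_i-y_k\|<\|y_i-y_j\|\}| \le |\{k : \|x_i^0-x_k^0\|<\|x_i^0-x_j^0\|+2\rho\}| < m$, which is exactly the condition $a_{ij}=1$ in $G_y$. Since $(i,j)$ was an arbitrary arc of $H_\rho$, this proves $H_\rho\subseteq G_y$. I do not expect a genuine obstacle here: the argument is a short chain of triangle inequalities, and the only point requiring care is the bookkeeping of the $2\rho$ slack together with the consistent use of strict inequalities, so that ties in the reference distances are absorbed by the buffer rather than breaking the inclusion.
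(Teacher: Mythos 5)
Your proof is correct and follows essentially the same route as the paper's: both reduce the claim to the set inclusion $\{k:\|y_i-y_k\|<\|y_i-y_j\|\}\subseteq\{k:\|x_i^0-x_k^0\|<\|x_i^0-x_j^0\|+2\rho\}$ and establish it by spending one $\rho$ of triangle-inequality slack on each of the two compared distances. The only difference is cosmetic — you package the two triangle inequalities into a single two-sided bound $\bigl|\,\|y_i-y_j\|-\|x_i^0-x_j^0\|\,\bigr|\le\rho$ before chaining, while the paper applies them separately.
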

The constraint we assign to the disturbed positions implies that the modification on the relative distances between agents must not be greater than $\rho$.

\begin{proof}
Let $i$ and $j$ in $\NN$ such that edge $(j,i)$ belongs to graph $H_\rho$ (\ie $\taa_{ij}=1$).
Denote $V_{x^0}$ the set $\{ k\in \NN \;|\; \|x_i^0-x_j^0\| > \| x_i^0 - x_k^0 \|-2\rho\}$ and $V_y$ the set $\{ k\in \NN \;|\; \|y_i-y_j\| > \| y_i - y_k \|\}$. We show that edge $(j,i)$ also is in $G_y$, which is true if $|V_y|<m$. Since $\taa_{ij}=1$, $|V_{x^0}|<m$. Thus, it is sufficient to show $V_y \subseteq V_{x^0}$. Let $k$ in $V_y$. Using the hypothesis of the lemma,
\begin{eqnarray*}
\|x_j^0-x_i^0\| &\ge& \|x_j^0-x_i^0\| + \| y_j - y_i - (x_j^0 - x_i^0)\| - \rho 
\ge \| y_j - y_i\| - \rho,
\end{eqnarray*}
where we have used the triangle inequality to obtained the last inequality. Using the same arguments,
\begin{eqnarray*}
\|x_k^0-x_i^0\| &\le& \|x_k^0-x_i^0\| - \| y_k - y_i - (x_k^0 - x_i^0)\| + \rho 
\le \| y_k - y_i\| + \rho.
\end{eqnarray*}
Combining the two previous inequalities, we obtain
\begin{eqnarray*}
\|x_j^0-x_i^0\| - \|x_k^0-x_i^0\| &\ge& \| y_j - y_i\| - \| y_k - y_i\| - 2\rho 
> - 2\rho,
\end{eqnarray*}
where we have used that $k \in V_y$ to obtain the last inequality. This shows that $k \in V_{x^0}$.
\end{proof}

Theorem~\ref{th:flocking-topo-main} assumes that $\rho$, the maximal authorized disturbance, is chosen so that $H_\rho$ has a spanning tree. We will exhibit a set of values of $\rho$ for which this hypothesis is satisfied. The highest value is denoted $\rho_{G_{x^0}}$ and is called the {\it robustness of graph $G_{x^0}$}. The corresponding graph $H_{\rho_{G_{x^0}}}$ is called the core subgraph of $G_{x^0}$ and is denoted $\KK(G_{x^0})$.

For $(j,i) \in G_{x^0}$, the robustness of interaction from $j$ to $i$ is defined as 
$$
s(j,i) = \frac{1}{2}\left(\|x_i^0 - x_{p(i)}^0\| - \|x_i^0 - x_j^0\|\right),
$$
where $p(i)$ is the index of the $m+1$-th closest agent of $i$ (consequently agent $p(i)$ does not influence $i$ in the initial interaction graph). Robustness $s(j,i)$ is chosen so that if initial distances between agents do not change more than $s(j,i)$, agent $j$ carries on influencing agent $i$. Consider a spanning tree in $G_{x^0}$. Denote $r$ its root. Let $i \in \NN \setminus \{r\}$ and $(i_0,i_2,\ldots,i_q)$ a path from $r$ to $i$ in $G_{x^0}$. We define the robustness of this path as
$$
s(i_0,i_2,\ldots,i_q) = \mmin_{0\le k \le q-1} s(i_k,i_{k+1}).
$$
Similarly, if the disturbance on the initial distance is smaller than robustness $s(i_0,i_2,\ldots,i_q)$ then the influence path $(i_0,i_2,\ldots,i_q)$ from $r$ to $i$ is preserved in the interaction graph. In order to preserve a spanning tree with root $r$, it is sufficient that one path is preserved from $r$ to $i$ for all $i \in \NN \setminus \{r\}$. We define the robustness from $r$ to $i$ as
$$
\rho_{ri} = \mmax_{(i_0,i_2,\ldots,i_q) \in Paths_{G_{x^0}}(r,i)} s(i_0,i_2,\ldots,i_q),
$$
where $Paths_{G_{x^0}}(r,i)$ is the set of paths from $r$ to $i$ in $G_{x^0}$. We then define the robustness of $r$ as a root of a spanning tree in $G_{x^0}$ as
$$
\rho_{r} = \mmin_{i\in \NN\setminus\{r\}} \rho_{ri}.
$$
Finally, we define the robustness of $G_{x^0}$ leading to the preservation of at least one spanning tree as
$$
\rho_{G_{x^0}} = \mmax_{r\in \NN} \rho_{r}.
$$
Then, denote  $\KK(G_{x^0}) = H_{\rho_{G_{x^0}}}$ the core subgraph of $G_{x^0}$.
These definitions allow to give the following proposition:
\begin{proposition}
Assume that the initial graph $G_{x^0}$ has a spanning tree. Then, for $\rho \in [0,\rho_{G_{x^0}}]$,
$H_\rho$ has a spanning tree whereas for $\rho > \rho_{G_{x^0}}$, $H_\rho$ has none.
\end{proposition}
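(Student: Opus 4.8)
The plan is to reduce the global statement about the existence of a spanning tree in $H_\rho$ to an elementary, edge-by-edge threshold, and then to propagate that threshold through the nested definitions of path-, root- and graph-robustness by matching their $\min$/$\max$ structure with the logical quantifiers that define reachability. The whole argument rests on a single geometric equivalence; everything else is bookkeeping.

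First I would record monotonicity and the key edge characterization. For $\rho\ge 0$ the set $\{k:\|x_i-x_j\|>\|x_i-x_k\|\}$ is contained in $\{k:\|x_i-x_j\|>\|x_i-x_k\|-2\rho\}$, so $\taa_{ij}=1$ forces $a_{ij}=1$; hence $H_\rho\subseteq G_{x^0}$ and the family $(H_\rho)_\rho$ is non-increasing in $\rho$. The crucial observation, and the only place where the geometry really enters, is that for an edge $(j,i)\in G_{x^0}$,
$$(j,i)\in H_\rho \iff \rho\le s(j,i).$$
To see this I would read (\ref{eq:H-interaction-topo-perturbee}) as counting the agents $k$ whose distance to $i$ is smaller than $\|x_i^0-x_j^0\|+2\rho$. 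Since $j$ belongs to the $m$ closest neighbors of $i$, this count stays below $m$ exactly until the $(m+1)$-th closest agent $p(i)$ enters the enlarged ball, that is, until $\|x_i^0-x_{p(i)}^0\|<\|x_i^0-x_j^0\|+2\rho$; rearranging gives the threshold $\rho\le\frac12\big(\|x_i^0-x_{p(i)}^0\|-\|x_i^0-x_j^0\|\big)=s(j,i)$, which is precisely the margin justified by Lemma~\ref{lemma:preserv}. The only care needed here is the bookkeeping of strict versus non-strict inequalities in the presence of equidistant agents.

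Next I would lift this equivalence along the chain of definitions. A path $(i_0,\dots,i_q)$ of $G_{x^0}$ survives in $H_\rho$ iff each of its edges does, i.e. iff $\rho\le\min_k s(i_k,i_{k+1})=s(i_0,\dots,i_q)$. Because any walk reduces to a path, a node $i$ is reachable from $r$ in $H_\rho$ iff some $G_{x^0}$-path from $r$ to $i$ survives, i.e. iff $\rho\le\max_{\text{paths}} s(\cdot)=\rho_{ri}$. Consequently $H_\rho$ contains a spanning tree rooted at $r$ iff every $i\ne r$ is reachable from $r$, i.e. iff $\rho\le\min_{i\ne r}\rho_{ri}=\rho_r$; and $H_\rho$ contains a spanning tree at all iff this holds for some root, i.e. iff $\rho\le\max_r\rho_r=\rho_{G_{x^0}}$.

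Finally I would read off both assertions from this single equivalence. If $\rho\in[0,\rho_{G_{x^0}}]$, taking a root $r^\ast$ attaining $\rho_{G_{x^0}}=\max_r\rho_r$ gives $\rho\le\rho_{r^\ast}$, so $H_\rho$ admits a spanning tree rooted at $r^\ast$; the assumption that $G_{x^0}=H_0$ has a spanning tree guarantees $\rho_{G_{x^0}}\ge 0$, so the interval is nonempty. Conversely, if $\rho>\rho_{G_{x^0}}$ then $\rho>\rho_r$ for every $r$, so for each candidate root $r$ some node fails to be reachable and no spanning tree can exist. I expect the single genuine obstacle to be the edge-level equivalence above: everything afterwards is the routine propagation of a scalar threshold through nested $\min$/$\max$, whereas that first step is where (\ref{eq:H-interaction-topo-perturbee}) and the definition of $s(j,i)$ must be reconciled, including the treatment of ties.
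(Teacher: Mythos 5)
Your proposal is correct and follows essentially the same route as the paper: the edge-level threshold $(j,i)\in H_\rho \iff \rho \le s(j,i)$ is exactly the computation the paper carries out (with $h=i_k$, $l=i_{k+1}$ and the set $V$), and the propagation through the nested $\min$/$\max$ defining $\rho_{ri}$, $\rho_r$, $\rho_{G_{x^0}}$ mirrors the paper's choice of an optimal root and optimal paths. The only difference is presentational — you state the edge equivalence up front and make explicit the converse direction (using $H_\rho\subseteq G_{x^0}$ so that every $H_\rho$-path is a $G_{x^0}$-path), which the paper only sketches.
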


\begin{proof}
We start with the proof of the first part of the proposition. Let $\rho \in [0,\rho_{G_{x^0}}]$. Denote $r\in \NN$ such that 
$$\rho_{G_{x^0}} = \rho_{r}.$$
We shall show that $H_{\rho}$ holds a spanning tree with root $r$. Let $i \in \NN \setminus \{r\}$. Denote
$(i_0,i_2,\ldots,i_q)$ a path from $r$ to $i$ in $G_{x^0}$ such that
$$
s(i_0,i_2,\ldots,i_q) = \rho_{ri}.
$$
Let $k \in \{0,\ldots,q-1\}$.
Then, we have
$$
s(i_k,i_{k+1}) \ge s(i_0,i_2,\ldots,i_q) = \rho_{ri} \ge \rho_r = \rho_{G_{x^0}} \ge \rho.
$$
Let us show that edge $(i_k,i_{k+1})$ is in $H_\rho$. To increase the readability, denote
$h = i_k$ and $l = i_{k+1}$.
We have,
$$
\|x_l^0 - x_{p(l)}^0\| - \|x_l^0 - x_h^0\| \ge 2 \rho.
$$
Denote $V = \{ u \in \NN\setminus \{h\} \;|\; \|x_l^0-x_h^0\| > \| x_l^0 - x_u^0 \|-2\rho\}$. Let $u$ in $V$. Let us show that $u$ is an in-neighbor of $l$ in $G_{x^0}$ (\ie $(u,l)$ is an edge of $G_{x^0}$):
\begin{eqnarray*}
\| x_l^0 - x_u^0 \| &<& 2\rho + \|x_l^0-x_h^0\| 
< (\|x_l^0 - x_{p(l)}^0\| - \|x_l^0 - x_h^0\|) + \|x_l^0-x_h^0\| 
< \|x_l^0 - x_{p(l)}^0\|.
\end{eqnarray*}
As expected, according to the definition of $p(l)$, $u$ belongs to the $m$ closest neighbors of $l$ in $G_{x^0}$ and by definition $u \neq h$, so $u$ can take at most $m-1$ values : $|V| <m$. This shows that $(h,l) \in H_\rho$. So, for all $i \in \NN \setminus \{r\}$, there exists a path from $r$ to $i$ in $H_\rho$. Thus, $H_\rho$ has a spanning tree with root $r$.

Turning to the second part of the result, assume that $\rho > \rho_{G_{x^0}}$. Then, for all $r\in \NN$,
$\rho_{r} < \rho$. It is possible to repeat the same type of reasoning we have used in the first part of the proof to obtain that for all $i \in \NN \setminus \{r\}$, and all paths $(i_0,i_2,\ldots,i_q)$ from $r$ to $i$ in $G_{x_0}$, there exists $k \in \{0,\ldots,q-1\}$ such that $(i_k,i_{k+1})$ is not in $H_\rho$. This is due to the fact that $p(i_{k+1})$, the $m+1$-th closest agent of $i_{k+1}$ in $G_{x^0}$ is in the neighbor set of $i_{k+1}$ in $H_\rho$.
\end{proof}

\subsection{Proofs}\label{sec:flocking-topo-preuves}

In this section, we show Lemma~\ref{l:deltauk1ttau-principal} as well as Theorem~\ref{th:flocking-topo-main}. Recall that proving Lemma~\ref{l:deltauk1ttau-principal} requires to show that the influence of $\UU_k$ on $\UU_{k+1}$ over time interval $[t,t+\tau]$ leads to the decrease of the diameter of $\UU_{k+1}$ as follows (equation~\ref{eq:rec}):
$$
\Delta_{\UU_{k+1}}(t+\tau) \leq \Delta_\NN(t_0) - c_k(\tau) (\Delta_{\NN}(t_0) - \Delta_{\UU_k}(t)).
$$
To obtain this result, we adopt the following reasoning:
\begin{description}
\item (i) (Lemma~\ref{l:dotdeltaUk1s}) The evolution of $\Delta_{\UU_{k+1}}$ at time $s \in [t,t+\tau]$ (\ie $\dot{\Delta}_{\UU_{k+1}}(s)$) is due to two terms:
\begin{itemize}
\item $\uak (\Delta_{\UU_k}(s) - \Delta_{\UU_{k+1}}(s))$ : $\Delta_{\UU_{k+1}}$ decreases thanks to agents in $\UU_k$, and
\item $(m-\uak) (\Delta_{\NN(t_0)}(s) - \Delta_{\UU_{k+1}}(s))$ : $\Delta_{\UU_{k+1}}$ increases because of agents not in $\UU_k$.
\end{itemize}
\item (ii) It would be possible to integrate the result from Lemma~\ref{l:dotdeltaUk1s}. However, we do not know the value of $\Delta_{\UU_k}(s)$ in the first term. To obtain it, we proceed as follows: since the increase of $\Delta_{\UU_k}$ is only a result of the influence of agents not in $\UU_k$, we obtain
$$
\dot{\Delta}_{\UU_k}(s) \le (m-\alpha_k) (\Delta_{\NN(t_0)}(s) - \Delta_{\UU_k}(s)).
$$
Integration then allows to bound the unknown value of $\Delta_{\UU_k}(s)$ as a function of $\Delta_{\UU_k}(t)$ (Lemma~\ref{l:deltaUk}).
\item (iii) The injection of the result from Lemma~\ref{l:deltaUk} in the one from Lemma~\ref{l:dotdeltaUk1s}, followed by integration yields Lemma~\ref{l:deltauk1ttau-principal}, as desired.
\end{description}


\begin{lemma}\label{l:deltaUk}
Let $k\in \{1,\ldots,D\}$. Let $t_0\ge 0$ some initial time. Let $t\geq t_0$ and $s>t$ some future time. Then, we have
\begin{equation}\label{eq:deltaUk}
\Delta_{\UU_k}(s) \leq \Delta_{\NN}(t_0) - e^{-(m-\uak) (s-t)}(\Delta_{\NN}(t_0) - \Delta_{\UU_k}(t)).
\end{equation}
\end{lemma}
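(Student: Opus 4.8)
The plan is to turn the claim into a scalar linear differential inequality for $\sigma\mapsto\Delta_{\UU_k}(\sigma)$ on $[t,s]$ and integrate it. The target estimate (announced in step~(ii) of the outline) is that, at every time $\sigma$ where hypothesis~\ref{as:alpha-preserv} is in force, the upper right Dini derivative satisfies
$$ D^+\Delta_{\UU_k}(\sigma)\ \le\ (m-\uak)\big(\Delta_\NN(t_0)-\Delta_{\UU_k}(\sigma)\big). $$
Once this is in hand, (\ref{eq:deltaUk}) follows by a Gr\"onwall-type comparison, so the real work is proving the differential inequality.

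To prove it I would fix $\sigma$, pick a pair $i,j\in\UU_k$ realizing $\Delta_{\UU_k}(\sigma)=\|v_i(\sigma)-v_j(\sigma)\|$, and set $e=(v_i-v_j)/\|v_i-v_j\|$ (the case $\Delta_{\UU_k}(\sigma)=0$ being trivial). Since $\Delta_{\UU_k}$ is a maximum of finitely many smooth functions $\|v_p-v_q\|$, its upper Dini derivative is bounded by the derivative along any active pair, so $D^+\Delta_{\UU_k}(\sigma)\le\langle e,\dot v_i-\dot v_j\rangle$. The key observation is that $i$ and $j$ are the extreme agents of $\UU_k$ in the direction $e$: if some $l\in\UU_k$ had $\langle e,v_l\rangle>\langle e,v_i\rangle$, the pair $(l,j)$ would exceed the diameter, a contradiction, and symmetrically for $j$. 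Hence $\langle e,v_l-v_i\rangle\le0$ and $\langle e,v_l-v_j\rangle\ge0$ for every $l\in\UU_k$, while $\langle e,v_i\rangle-\langle e,v_j\rangle=\Delta_{\UU_k}(\sigma)$.

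Now I would split each sum $\dot v_i=\sum_l a_{il}(v_l-v_i)$ (and likewise for $j$) into in-neighbours inside and outside $\UU_k$. By the extremality above the inside terms contribute nonpositively to $\langle e,\dot v_i\rangle$ and to $-\langle e,\dot v_j\rangle$, so they can be dropped. For the outside terms I would use two facts: hypothesis~\ref{as:alpha-preserv} (with $\UU_k\subseteq\UU_{k+1}$) bounds the outgoing weight by $\sum_{l\notin\UU_k}a_{il}\le m-\uak$; and all velocities remain in the convex hull of $v(t_0)$, since each $\dot v_i$ points toward a convex combination of the current velocities, whence the projected width $\max_p\langle e,v_p(\sigma)\rangle-\min_p\langle e,v_p(\sigma)\rangle\le\Delta_\NN(\sigma)\le\Delta_\NN(t_0)$. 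Writing $M^+,M^-$ for these global projection extrema and bounding $\langle e,v_l-v_i\rangle\le M^+-\langle e,v_i\rangle$ and $\langle e,v_j-v_l\rangle\le\langle e,v_j\rangle-M^-$ for $l\notin\UU_k$, the two extremal terms telescope into
$$ D^+\Delta_{\UU_k}(\sigma)\le(m-\uak)\big[(M^+-M^-)-\Delta_{\UU_k}(\sigma)\big]. $$
As the bracket is nonnegative, because $M^+-M^-\ge\langle e,v_i\rangle-\langle e,v_j\rangle=\Delta_{\UU_k}(\sigma)$, and $M^+-M^-\le\Delta_\NN(t_0)$, I may replace $M^+-M^-$ by $\Delta_\NN(t_0)$ to reach the target inequality.

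Finally, for the integration I would set $w(\sigma)=\Delta_\NN(t_0)-\Delta_{\UU_k}(\sigma)$; the inequality reads $D^+w(\sigma)\ge-(m-\uak)w(\sigma)$, so $\sigma\mapsto e^{(m-\uak)\sigma}w(\sigma)$ is non-decreasing on $[t,s]$, and evaluating at $t$ and $s$ yields $w(s)\ge e^{-(m-\uak)(s-t)}w(t)$, which is exactly (\ref{eq:deltaUk}). The main obstacle is the differential inequality itself: handling the non-smoothness of the diameter through the Dini derivative, and the sign bookkeeping that makes the internal interactions drop out while the external ones collapse to $(M^+-M^-)-\Delta_{\UU_k}$; the integration is routine. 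For the boundary index $k=D$ the set $\UU_D=\NN$ has no external in-neighbours, so the external weight is $0$ and the statement reduces to the monotonicity $\Delta_\NN(s)\le\Delta_\NN(t)$, which the same convex-hull argument already supplies.
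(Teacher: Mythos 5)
Your proposal is correct and follows essentially the same route as the paper's proof: identify the extremal pair in $\UU_k$, drop the internal interaction terms by an extremality/separation argument (the paper's Lemma~\ref{lemma-sep}), bound the external terms by $(m-\uak)$ times the gap to the global diameter using Hypothesis~\ref{as:alpha-preserv}, and integrate the resulting linear differential inequality. Your Dini-derivative formalism, the projection extrema $M^\pm$ in place of the paper's $h_{max}$ and $l_{max}$, and the explicit treatment of $k=D$ are only cosmetic refinements of the same argument.
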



Regarding the second step of the proof of Lemma~\ref{l:deltauk1ttau-principal}, we use Lemma~\ref{l:deltaUk} in order to estimate the contraction rate of $\Delta_{\UU_{k+1}}$.

\begin{lemma}\label{l:dotdeltaUk1s}
Let $t_0\ge 0$ some initial time and $s>t_0$ some future time. Then we have
\begin{equation}\label{eq-first-case}
\begin{array}{rclll}
\dot{\Delta}_{\UU_1}(s) &\leq& - \Delta_{\UU_1}(s) &+& m (\Delta_\NN(t_0) - \Delta_{\UU_1}(s))\\
&&&&\\
\text{ and for }& k \in& \{1,\ldots,D-1\},&&\\
\dot{\Delta}_{\UU_{k+1}}(s) &\leq& \underbrace{\uak (\Delta_{\UU_k}(s) - \Delta_{\UU_{k+1}}(s))} &+& \underbrace{(m-\uak) (\Delta_\NN(t_0) - \Delta_{\UU_{k+1}}(s))}.\\
&& \text{decrease of } \Delta_{\UU_{k+1}} && \text{increase of } \Delta_{\UU_{k+1}}\\
&& \text{due to } \UU_k && \text{due to the rest of the group }
\end{array}
\end{equation}

\end{lemma}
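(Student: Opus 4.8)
The plan is to bound the upper right Dini derivative of $\Delta_{\UU_{k+1}}$ by examining the two agents that currently realise it. Since $\Delta_{\UU_{k+1}}(s)=\max_{i,j\in\UU_{k+1}}\|v_i(s)-v_j(s)\|$ is a maximum of finitely many smooth functions, its upper right derivative $D^+\Delta_{\UU_{k+1}}(s)$ equals the largest value of $\frac{d}{ds}\|v_p-v_q\|$ over the pairs $(p,q)\in\UU_{k+1}^2$ attaining the maximum, so it suffices to bound this quantity for one such pair (when $\Delta_{\UU_{k+1}}(s)=0$ the asserted right-hand side is nonnegative and nothing is to prove). I would fix the unit vector $u=(v_p-v_q)/\|v_p-v_q\|$ and write $\phi_i=\langle u,v_i\rangle$. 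The geometric fact driving everything is that $p$ and $q$ are the extreme points of $\{v_i:i\in\UU_{k+1}\}$ in the direction $u$: expanding $\|v_i-v_q\|^2\le\|v_p-v_q\|^2$ and $\|v_i-v_p\|^2\le\|v_p-v_q\|^2$ yields $\langle u,v_i-v_p\rangle\le0$ and $\langle u,v_i-v_q\rangle\ge0$ for every $i\in\UU_{k+1}$, hence $\phi_q\le\phi_i\le\phi_p$ and $\phi_p-\phi_q=\Delta_{\UU_{k+1}}(s)$.

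Next, using $\frac{d}{ds}\|v_p-v_q\|=\langle u,\dot v_p-\dot v_q\rangle$ together with system (\ref{sys:consensus-v}), I would write
\[
\frac{d}{ds}\|v_p-v_q\|=\ssum_{j}a_{pj}(\phi_j-\phi_p)-\ssum_{j}a_{qj}(\phi_j-\phi_q)
\]
and split each sum into neighbours in $\UU_k$ and neighbours outside $\UU_k$. Setting $M_k=\max_{\UU_k}\phi$, $\mu_k=\min_{\UU_k}\phi$, $M_\NN=\max_{\NN}\phi$ and $\mu_\NN=\min_{\NN}\phi$, the inside neighbours of $p$ satisfy $\phi_j-\phi_p\le M_k-\phi_p\le0$ and the outside ones satisfy $\phi_j-\phi_p\le M_\NN-\phi_p\ (\ge 0)$, with symmetric bounds around $\phi_q$. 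Hypothesis~\ref{as:alpha-preserv} guarantees at least $\uak$ inside and at most $m-\uak$ outside neighbours for both $p$ and $q$. The delicate step---and the one I expect to be the main obstacle---is the sign bookkeeping when the exact in/out degrees are replaced by $\uak$ and $m-\uak$: because the inside contributions are nonpositive the inside degree may be shrunk down to $\uak$, while because the outside contributions are nonnegative the outside degree may be enlarged up to $m-\uak$, and each such replacement has to be checked to preserve the direction of the inequality.

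Recombining the $p$- and $q$-terms then gives
\[
\frac{d}{ds}\|v_p-v_q\|\le \uak\big[(M_k-\mu_k)-(\phi_p-\phi_q)\big]+(m-\uak)\big[(M_\NN-\mu_\NN)-(\phi_p-\phi_q)\big].
\]
Because the projection of a diameter never exceeds it, $M_k-\mu_k\le\Delta_{\UU_k}(s)$ and $M_\NN-\mu_\NN\le\Delta_\NN(s)$, while $\phi_p-\phi_q=\Delta_{\UU_{k+1}}(s)$; since $\uak\ge0$ and $m-\uak\ge0$ this produces the bound $\uak(\Delta_{\UU_k}(s)-\Delta_{\UU_{k+1}}(s))+(m-\uak)(\Delta_\NN(s)-\Delta_{\UU_{k+1}}(s))$. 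Finally I would replace $\Delta_\NN(s)$ by $\Delta_\NN(t_0)$, which is legitimate because the global velocity diameter is non-increasing along (\ref{sys:consensus-v})---the very same support-point computation applied to $\NN=\UU_D$, where there are no outside neighbours, gives $\dot\Delta_\NN\le0$---so $\Delta_\NN(s)\le\Delta_\NN(t_0)$ for $s\ge t_0$. For the base case $k=0$ the singleton $\UU_0=\{r\}$ has $\Delta_{\UU_0}=0$, and the first item of Hypothesis~\ref{as:alpha-preserv} ($a_{ir}=1$ for $i\in\UU_1$) plays the role of ``$\alpha_0=1$''; the identical argument then yields $-\Delta_{\UU_1}(s)+(m-1)(\Delta_\NN(t_0)-\Delta_{\UU_1}(s))$, and loosening $m-1$ to $m$---valid since $\Delta_\NN(t_0)-\Delta_{\UU_1}(s)\ge0$---gives the stated inequality.
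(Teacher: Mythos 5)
Your proof is correct and follows essentially the same route as the paper: you split the derivative of the maximizing pair's separation into contributions from in-$\UU_k$ and out-of-$\UU_k$ neighbours, use the extremality of the maximizing pair (your projection inequalities $\phi_q\le\phi_i\le\phi_p$ are exactly the paper's Lemma~\ref{lemma-sep}, and your scalar bookkeeping is the paper's inner-product computation divided by $\|v_p-v_q\|$), and count neighbours via Hypothesis~\ref{as:alpha-preserv}. The only cosmetic differences are that you re-derive the extremality fact inline, make the monotonicity of $\Delta_\NN$ explicit, and obtain the slightly sharper constant $m-1$ in the $k=0$ case before loosening it to the stated $m$.
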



We merge this last result to the bound on $\Delta_{\UU_k}(s)$ for $s>t$ given by Lemma~\ref{l:deltaUk} to obtain a bound on $\Delta_{\UU_{k+1}}(t+\tau)$. We give the proofs of Lemmas~\ref{l:deltaUk},~\ref{l:dotdeltaUk1s} and~\ref{l:deltauk1ttau-principal} in this order.

In the proofs, we make use of the following lemma:
\begin{lemma}\label{lemma-sep}
Let $x$, $y$ and $z$ in $\R^n$ such that $\|x-y\| \geq \|x-z\|$. Then,
$$
\ps{y-x}{z-y} \leq 0.
$$
\end{lemma}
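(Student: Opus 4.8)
The plan is to reduce the claim to a single application of the Cauchy--Schwarz inequality after recentering every vector at the point $x$. Concretely, I would introduce $a = y-x$ and $b = z-x$, so that the hypothesis $\|x-y\| \ge \|x-z\|$ reads simply $\|a\| \ge \|b\|$. The second argument of the inner product then rewrites as $z - y = (z-x) - (y-x) = b - a$, whence
$$
\ps{y-x}{z-y} = \ps{a}{b-a} = \ps{a}{b} - \|a\|^2 .
$$
So the whole statement is equivalent to the bound $\ps{a}{b} \le \|a\|^2$.

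The second step is to obtain this bound by chaining Cauchy--Schwarz with the norm hypothesis. I would write $\ps{a}{b} \le \|a\|\,\|b\| \le \|a\|\,\|a\| = \|a\|^2$, where the first inequality is Cauchy--Schwarz and the second uses $\|b\| = \|z-x\| \le \|y-x\| = \|a\|$, i.e. exactly the assumption of the lemma. Substituting back into the identity above yields $\ps{y-x}{z-y} \le 0$, as desired.

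There is essentially no obstacle here: once the substitution $z-y = b-a$ is in place, the result is immediate. The only point requiring minor care is the direction of the norm inequality, since we genuinely need $\|b\| \le \|a\|$ (the hypothesis $\|x-z\| \le \|x-y\|$) and not the reverse. Geometrically the statement merely records that the interior angle at the vertex $y$ of the triangle $x,y,z$ is non-obtuse whenever $y$ is no closer to $x$ than $z$ is; indeed, an obtuse angle at $y$ would force the opposite side $\|x-z\|$ to be strictly the largest, contradicting $\|x-y\| \ge \|x-z\|$. The Cauchy--Schwarz computation above is simply the algebraic form of this observation, and it is the version I would actually write down in the proof.
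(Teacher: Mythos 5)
Your argument is correct and is essentially identical to the paper's one-line proof: both decompose $z-y$ as $(z-x)-(y-x)$ to get $\ps{y-x}{z-y}=\ps{y-x}{z-x}-\|y-x\|^2$ and then apply Cauchy--Schwarz together with the hypothesis $\|x-z\|\le\|x-y\|$. No issues.
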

\begin{proof}
$\ps{y-x}{z-y} \leq \ps{y-x}{z-x+x-y} \leq \ps{y-x}{z-x} -\|x-y\|^2 \leq \|y-x\|(\|z-x\| -\|x-y\|) \leq 0$.
\end{proof}

\begin{proof}[Proof of Lemma~\ref{l:deltaUk}]
Let $t_0$, $t$ and $s$ successive times such that $t_0<t<s$. Let $k$ in $\{1,\ldots,D-1\}$. Let $i$ and $j$ two indices of $\UU_k$ maximizing the distance between velocities at time $s$ (\ie such that $\|v_i(s)-v_j(s)\|=\Delta_{\UU_k}(s)$). We bound above the derivative of $\|v_i-v_j\|$ using
\begin{equation}\label{eq:dot-vect}
	\dot{\overfence{\|v_i-v_j\|}} = \frac{\ps{\dot{v_i}-\dot{v_j}}{v_i-v_j}}{\|v_i-v_j\|}.
\end{equation}
We can write the numerator of the right-hand side of the equality in the following way :
\begin{equation}\label{eq-start}
\ps{\dot{v_i}-\dot{v_j}}{v_i-v_j} \: = \ssum_{h\in \NN}a_{ih}\ps{v_h-v_i}{v_i-v_j} + \ssum_{l\in \NN}a_{jl}\ps{v_l-v_j}{v_j-v_i}. \\
\end{equation}
In the two sums of this equation, $i$ and $j$ play symmetric roles. We start by studying the first sum; the result on the second one is then obtained through a similar reasoning. We can split the first sum in two parts: the influence from agents in $\UU_k$ and those from agents out of $\UU_k$, which gives
$$
\ssum_{h\in \NN}a_{ih}\ps{v_h-v_i}{v_i-v_j} = 
\ssum_{h\in \UU_k}a_{ih}\ps{v_h-v_i}{v_i-v_j} +
\ssum_{h\in \NN\setminus \UU_k}a_{ih}\ps{v_h-v_i}{v_i-v_j}.
$$
Applying Lemma~\ref{lemma-sep} with $(x,y,z):=(v_j,v_i,v_h)$, we obtain that the first sum of the previous equality is non-negative. This corresponds to the fact that agents in $\UU_k$ do not take part in a positive way in the increase of diameter $\Delta_{\UU_k}$.
Denote $h_{max}$ in $\NN$ such that
$$
\ps{v_{h_{max}}-v_i}{v_i-v_j} = \mmax_{h \in \NN} \ps{v_{h}-v_i}{v_i-v_j}.
$$
We then have
$$
\ssum_{h\in \NN}a_{ih}\ps{v_h-v_i}{v_i-v_j} \leq 
\ssum_{h\in \NN\backslash \UU_k}a_{ih} \ps{v_{h_{max}}-v_i}{v_i-v_j}.
$$
By definition of $h_{max}$, we have $\ps{v_{h_{max}}-v_i}{v_i-v_j} \geq 0$. 

So, according to Proposition~\ref{prop:topo-alphak}, 
\begin{equation}\label{eq:topo-hmax}
\ssum_{h\in \NN}a_{ih}\ps{v_h-v_i}{v_i-v_j}
\leq (m-\uak)\ps{v_{h_{max}}-v_i}{v_i-v_j}.
\end{equation}
Using a similar reasoning, we get
\begin{equation}\label{eq:topo-lmax}
\ssum_{l\in \NN}a_{jl}\ps{v_l-v_j}{v_j-v_i}
\leq (m-\uak)\ps{v_{l_{max}}-v_j}{v_j-v_i},
\end{equation}
where $l_{max} \in \NN$ is defined so that
$$
\ps{v_{l_{max}}-v_j}{v_j-v_i} = \mmax_{l \in \NN} \ps{v_{l}-v_j}{v_j-v_i}.
$$
Equations~(\ref{eq:topo-hmax}), (\ref{eq:topo-lmax}) and~(\ref{eq-start}) give
\begin{eqnarray*}
\ps{\dot{v_i}-\dot{v_j}}{v_i-v_j}
&\leq& (m-\uak)\left(\ps{v_{h_{max}}-v_{l_{max}}}{v_i-v_j} + \ps{v_i-v_j}{v_j-v_i}\right) \\
&\leq& (m-\uak)\left(\ps{v_{h_{max}}-v_{l_{max}}}{v_i-v_j} - \|v_i-v_j\|^2\right) \\
&\leq& (m-\uak)\left(\|v_{h_{max}}-v_{l_{max}}\|\cdot\|v_i-v_j\| - \|v_i-v_j\|^2\right) \\
&\leq& (m-\uak)\left(\Delta_{\NN}\|v_i-v_j\| - \|v_i-v_j\|^2\right).
\end{eqnarray*}
This result along with equation~(\ref{eq:dot-vect}) and $\Delta_\NN(s) \le \Delta_\NN(t_0)$ implies that
\begin{equation*}
\dot{\Delta}_{\UU_k}(s) \leq (m-\uak)(\Delta_{\NN}(t_0) - \Delta_{\UU_k}(s)).
\end{equation*}
After integrating over interval $[t,s]$, we obtain
\begin{equation}\label{eq-UUk}
\Delta_{\UU_k}(s) \leq \Delta_{\NN}(t_0) - e^{-(m-\uak) (s-t)}(\Delta_{\NN}(t_0) - \Delta_{\UU_k}(t)).
\end{equation}
\end{proof}


\begin{proof}[Proof of Lemma~\ref{l:dotdeltaUk1s}]
The proof of this lemma resembles the one of Lemma~\ref{l:deltaUk}.
Let $t_0$ and $s$ so that $0\le t_0\le s$. Let $k$ in $\{0,\ldots,D-1\}$.
Let $i,j \in \UU_{k+1}(s)$ distinct such that $\|v_i(s)-v_j(s)\|=\Delta_{\UU_{k+1}}(s)$.

Similarly to the previous proof, we bound above
\begin{equation*}
\ps{\dot{v_i}-\dot{v_j}}{v_i-v_j} = I_{\UU_k} + I_{\NN \backslash \UU_k},
\end{equation*}
with notation
\begin{eqnarray*}
I_{\UU_k} &=& \ssum_{h\in \UU_k}a_{ih}\ps{v_h-v_i}{v_i-v_j} + \ssum_{l\in \UU_k}a_{jl}\ps{v_l-v_j}{v_j-v_i},\\
I_{\NN \backslash \UU_k} &=& \ssum_{h\in \NN\backslash \UU_k}a_{ih}\ps{v_h-v_i}{v_i-v_j} + \ssum_{l\in \NN\backslash \UU_k}a_{jl}\ps{v_l-v_j}{v_j-v_i},
\end{eqnarray*}
where $I_{\UU_k}$ corresponds to the influence of agents in $\UU_k$ on the evolution of $\Delta_{\UU_{k+1}}$ and
$I_{\NN \backslash \UU_k}$ the one of agents out of $\UU_k$.
We start with providing an upper bound on $I_{\UU_k}$.

\noindent {\bf Upper bound on $I_{\UU_k}$}

We choose indices $h_{max}$ and $l_{max}$ in $\UU_k$ satisfying
\begin{eqnarray*}
\ps{v_{h_{max}}-v_i}{v_i-v_j} = \mmax_{h \in \UU_k} \ps{v_{h}-v_i}{v_i-v_j} \\
\text{and } \ps{v_{l_{max}}-v_j}{v_j-v_i} = \mmax_{l \in \UU_k} \ps{v_{l}-v_j}{v_j-v_i}.
\end{eqnarray*}
If $k=0$, then $\UU_k = \{r\}$ and $h_{max} = l_{max} = r$. We study two different cases.

\noindent {\it Case where $i=r$ or $j=r$.} \hspace{0.1cm}

Since the roles of $i$ and $j$ are symmetric, we can assume without loss of generality that $i=r$. Then, since $i\neq j$, $j \neq r$ and $j \in \NN_r^+$, so $a_{jr}=1$. This gives
\begin{equation}\label{eq:flo-topo-IUUk-k0}
I_{\UU_0} \leq a_{jr}\ps{v_r-v_j}{v_j-v_r} \leq -\|v_j-v_r\|^2 = -\|v_j-v_i\|^2.
\end{equation}

\noindent {\it Case where $i\neq r$ and $j\neq r$.} \hspace{0.1cm}

In this case, $a_{jr}=1$ and $a_{ir}=1$ and then
\begin{eqnarray*}
I_{\UU_0} &\leq& a_{ir} \ps{v_r-v_i}{v_i-v_j} + a_{ir}\ps{v_r-v_j}{v_j-v_i} \\
&\leq& \ps{v_r-v_i}{v_i-v_j} + \ps{v_r-v_j}{v_j-v_i} = \ps{v_j-v_i}{v_i-v_j} \\
&\leq& -\|v_j-v_j\|^2.
\end{eqnarray*}
We have obtained the same result in both cases.

\vspace{0.1cm}

If $k \geq 1$, we use Lemma~\ref{lemma-sep} and the fact that $\|v_i-v_j\| = \Delta_{\UU_{k+1}}$ to show that $\ps{v_h-v_i}{v_i-v_j} \leq 0$ and $\ps{v_l-v_j}{v_j-v_i} \leq 0$ for all $h,l$ in $\UU_k$. Thus,
\begin{equation*}
I_{\UU_k} \leq \uak (\ps{v_{h_{max}}-v_i}{v_i-v_j} + \ps{v_{l_{max}}-v_j}{v_j-v_i}).
\end{equation*}
The previous inequality can be rewritten as
\begin{eqnarray*}
I_{\UU_k} &\leq& \uak (\ps{v_{h_{max}}-v_{l_{max}}}{v_i-v_j} + \ps{v_i-v_j}{v_j-v_i})\\
&\leq& \uak (\Delta_{\UU_k} \|v_i-v_j\| - \|v_i-v_j\|^2).
\end{eqnarray*}
We now bound above $I_{\NN \backslash \UU_k}$.

\noindent {\bf Upper bound on $I_{\NN \backslash \UU_k}$}

For this part of the proof, it is possible to treat simultaneously the cases where $k=0$ and $k\ge 1$ by setting $\alpha_0 = 0$. Denote $h_{max}$ and $l_{max}$ in $\NN$ satisfying
\begin{eqnarray*}
\ps{v_{h_{max}}-v_i}{v_i-v_j} = \mmax_{h \in \NN} \ps{v_{h}-v_i}{v_i-v_j} \\
\ps{v_{l_{max}}-v_j}{v_j-v_i} = \mmax_{l \in \NN} \ps{v_{l}-v_j}{v_j-v_i}.
\end{eqnarray*}
Then,
\begin{eqnarray*}
I_{\NN \backslash \UU_k} &\leq& (m-\uak) (\ps{v_{h_{max}}-v_i}{v_i-v_j} + \ps{v_{l_{max}}-v_j}{v_j-v_i}) \\
&\leq& (m-\uak) (\ps{v_{h_{max}}-v_{l_{max}}}{v_i-v_j} + \ps{v_i-v_j}{v_j-v_i}) \\
&\leq& (m-\uak) (\Delta_\NN \|v_i-v_j\| - \|v_i-v_j\|^2),
\end{eqnarray*}
where we used $\ps{v_{h_{max}}-v_i}{v_i-v_j} \leq 0$ and $\ps{v_{l_{max}}-v_j}{v_j-v_i} \leq 0$ since it is always possible to choose $h_{max} := i$ and $l_{max}:=j$.

\vspace{0.5cm}

This result along with the bound on $I_{\UU_k}$ provided above grants, for $k=0$,
\begin{equation*}
\dot{\overfence{\|v_i(s)-v_j(s)\|}} \leq - \|v_j(s)-v_i(s)\| + m (\Delta_\NN(t_0) - \|v_i(s)-v_j(s)\|),
\end{equation*}
and for $k \geq 1$
\begin{equation*}
\dot{\overfence{\|v_i(s)-v_j(s)\|}} \leq \uak (\Delta_{\UU_k}(s) - \|v_j(s)-v_i(s)\|) + (m-\uak) (\Delta_\NN(t_0) - \|v_i(s)-v_j(s)\|).
\end{equation*}

According to the definition of $i$ and $j$, this rewrites to
\begin{equation*}
\begin{array}{rclll}
\dot{\Delta}_{\UU_1}(s) &\leq& - \Delta_{\UU_1}(s) &+& m (\Delta_\NN(t_0) - \Delta_{\UU_1}(s))\\
&&&&\\
\text{ and for } &k \geq 1,&&&\\
\dot{\Delta}_{\UU_{k+1}}(s) &\leq& 
\underbrace{\uak (\Delta_{\UU_k}(s) - \Delta_{\UU_{k+1}}(s))} &+& \underbrace{(m-\uak) (\Delta_\NN(t_0) - \Delta_{\UU_{k+1}}(s))}.\\
&& \text{decrease of } \Delta_{\UU_{k+1}} && \text{increase of } \Delta_{\UU_{k+1}}\\
&& \text{due to } \UU_k && \text{due to the rest of the group }
\end{array}
\end{equation*}
\end{proof}


\begin{proof}[Proof of Lemma~\ref{l:deltauk1ttau-principal}]
We first treat the case $k=0$. Lemma~\ref{l:dotdeltaUk1s} gives
\begin{eqnarray*}
\dot{\Delta}_{\UU_1}(s) &\leq& - \Delta_{\UU_1}(s) + m (\Delta_\NN(t_0) - \Delta_{\UU_1}(s)) \\
&\leq& (m+1) (\Delta_{\NN}(t_0) - \Delta_{\UU_1}(s)) - \Delta_{\NN}(t_0).
\end{eqnarray*}
An integration provides
\begin{eqnarray*}
\Delta_{\UU_1}(t+\tau)-\Delta_\NN(t_0) &\leq&
e^{-(m+1)\tau} \left(\Delta_{\UU_1}(t)-\Delta_\NN(t_0) - \int_{t}^{t+\tau} e^{(m+1)(s-t)} \Delta_{\NN}(t_0) 
ds\right)\\
&\leq& e^{-(m+1)\tau} \left(\Delta_{\UU_1}(t)-\Delta_\NN(t_0) - \frac{1}{m+1} (e^{(m+1)\tau}-1) \Delta_{\NN}(t_0) \right)\\
&\leq& e^{-(m+1)\tau} \left(\Delta_{\UU_1}(t)-\Delta_\NN(t_0) - \frac{1}{m+1} (e^{(m+1)\tau}-1) \Delta_{\NN}(t_0) \right).\\
\end{eqnarray*}
Since $\Delta_{\UU_1}(t) \leq \Delta_\NN(t_0)$, we have
\begin{equation*}
\Delta_{\UU_1}(t+\tau) 
\leq \Delta_\NN(t_0) - \frac{1}{m+1} (1 - e^{-(m+1)\tau}) \Delta_{\NN}(t_0).
\end{equation*}
Using notation $c_0(\tau) = \frac{1}{m+1} (1 - e^{-(m+1)\tau})$, we obtained the expected result. 

For $k\geq 1$, we replace $\Delta_{\UU_k}(s)$ in equation~(\ref{eq-first-case}) of Lemma~\ref{l:dotdeltaUk1s} by its upper bound given by equation~(\ref{eq:deltaUk}) of Lemma~\ref{l:deltaUk} to obtain
\begin{eqnarray*}
\dot{\Delta}_{\UU_{k+1}}(s) &\leq& 
\uak \left(\Delta_{\NN}(t_0) - e^{-(m-\uak) (s-t)}\left(\Delta_{\NN}(t_0) - \Delta_{\UU_k}(t)\right) - \Delta_{\UU_{k+1}}(s)\right) 
+ (m-\uak) \left(\Delta_\NN(t_0) - \Delta_{\UU_{k+1}}(s)\right) \\
&\leq& m \left(\Delta_{\NN}(t_0) - \Delta_{\UU_{k+1}}(s)\right) - \uak e^{-(m-\uak) (s-t)}\left(\Delta_{\NN}(t_0) - \Delta_{\UU_k}(t)\right).
\end{eqnarray*}
So,
$$
\frac{d}{ds} \left(\Delta_{\UU_{k+1}}(s) - \Delta_{\NN}(t_0)\right) \leq
-m \left(\Delta_{\UU_{k+1}}(s) - \Delta_{\NN}(t_0)\right) - \uak e^{-(m-\uak) (s-t)}\left(\Delta_{\NN}(t_0) - \Delta_{\UU_k}(t)\right).
$$
An integration gives
\begin{eqnarray*}
&&\Delta_{\UU_{k+1}}(t+\tau)-\Delta_\NN(t_0) \\ 
&&\leq e^{-m\tau} \left(
\Delta_{\UU_{k+1}}(t)-\Delta_\NN(t_0)
- \int_{t}^{t+\tau} e^{m(s-t)} \uak e^{-(m-\uak) (s-t)}(\Delta_{\NN}(t_0) - \Delta_{\UU_k}(t)) ds\right)\\
&&\leq
e^{-m\tau} \left(
\Delta_{\UU_{k+1}}(t)-\Delta_\NN(t_0)
- \uak (\Delta_{\NN}(t_0) - \Delta_{\UU_k}(t)) \int_{t}^{t+\tau} e^{\uak(s-t)} ds\right)\\
&&\leq
e^{-m\tau} \left(
\Delta_{\UU_{k+1}}(t)-\Delta_\NN(t_0)
- (\Delta_{\NN}(t_0) - \Delta_{\UU_k}(t)) (e^{\uak\tau}-1)\right)\\
&&\leq
e^{-m\tau} (\Delta_{\UU_{k+1}}(t)-\Delta_\NN(t_0)) 
- (\Delta_{\NN}(t_0) - \Delta_{\UU_k}(t))(e^{-(m-\uak) \tau}-e^{-m\tau}).
\end{eqnarray*}
Since $\Delta_{\UU_{k+1}}(t) \leq \Delta_\NN(t_0)$, we can write
\begin{equation*}
\Delta_{\UU_{k+1}}(t+\tau) \leq \Delta_\NN(t_0) - c_k(\tau) (\Delta_{\NN}(t_0) - \Delta_{\UU_k}(t)),
\end{equation*}
with $c_k(\tau) =  e^{-(m-\uak) \tau}(1-e^{-\uak\tau}) \geq 0$.
\end{proof}

We now give the proof of the main theorem.

\begin{proof}[Proof of Theorem~\ref{th:flocking-topo-main}]
We show this result by contradiction.
Assume that there exists some time $t>0$ for which $G(t)$ does not satisfy hypothesis~\ref{as:alpha-preserv} (\ie there exists $k$ in $\{1,\ldots,D-1\}$, for which $\uak$ is not a valid bounds in $G(t)$). Denote $t^*$ the lower bound on such a time $t$. If $t^*>0$, we have, according to Theorem~\ref{th:flocking-topo-contraction}, for all $t$ in $[0,t^*[$
\begin{equation*}
\Delta_\NN(t) \leq (1-\tilde{c})^h \Delta_\NN(0),
\end{equation*}
where $h$ is such that $h\leq t/\tilde{T} < h+1$.
Let $j$ and $i$ in $\NN$. Then
\begin{eqnarray*}
\|(x_j(t^*) - x_i(t^*)) - (x_j(0) - x_i(0))\| & \leq & \| \int_0^{t^*} (v_j(s) - v_i(s)) ds\| \\
\leq  \int_0^{t^*} \| v_j(s) - v_i(s) \| ds 
& \leq & \int_0^{t^*} \Delta_\NN(s) ds 
 \leq  \ssum_{h=0}^k \int_{h\tilde{T}}^{(h+1)\tilde{T}} \Delta_\NN(s) ds \\
\end{eqnarray*}
where $k$ is such that $k\leq t^*/\tilde{T} < k+1$.
This calculus results in
\begin{eqnarray*}
\|(x_j(t^*) - x_i(t^*)) - (x_j(0) - x_i(0))\|
& \leq & \tT \ssum_{h=0}^k (1-\tc)^h \Delta_\NN(0) ds \\
 <  \tT \ssum_{h=0}^{+\infty} (1-\tc)^h \Delta_\NN(0) 
& < & \frac{\tT}{\tc} \Delta_\NN(0). \\
\end{eqnarray*}
Using the bound on $\Delta_\NN(0)$ in the assumption of the theorem, we have
\begin{equation*}
\|(x_j(t^*) - x_i(t^*)) - (x_j(0) - x_i(0))\| < \rho.
\end{equation*}
By continuity of trajectory $x$, there exists $\varepsilon>0$ such that for all $t \in [t^*, t^*+\varepsilon]$
\begin{equation*}
\|(x_j(t) - x_i(t)) - (x_j(0) - x_i(0))\| \le \rho.
\end{equation*}
If $t^*=0$, the continuity of $x$ gives the same property.
According to Lemma~\ref{lemma:preserv}, we have for all $t \in [0, t^*+\varepsilon]$, $H_\rho \subseteq G(t)$ and thus according to Proposition~\ref{prop:topo-alphak}, $G(t)$ satisfies Hypothesis~\ref{as:alpha-preserv} (the $\uak$ remain valid bounds for $G(t)$). This leads to a contradiction with the definition of $t^*$. Thus, 
$G(t)$ satisfies Hypothesis~\ref{as:alpha-preserv} for all $t\ge 0$. Then, we can use the reasoning of the proof to deduce that $H_\rho \subseteq G(t)$ for all $t\ge 0$. Consequently, Theorem~\ref{th:flocking-topo-contraction} shows that diameter $\Delta_{\NN}(t)$ converges toward $0$ when $t$ goes to $+\infty$. Velocity alignment is reached asymptotically.
\end{proof}

\section{Numerical analysis}

\subsection{Optimization of the bound}\label{sec:flocking-topo-optimisation-de-la-borne}

In this section, we explicit the method to maximize the bound $\frac{\tc}{\tT} \rho$ given in Theorem~\ref{th:flocking-topo-main}. 

First, assume that the maximal authorized disturbance $\rho$ and root $r$ are fixed. Sets $\UU_k$ and $\SS_k$ for $k \in \{0,\ldots,D\}$ and values $\uak$ for $k \in \{1,\ldots,D-1\}$ are thus known. We explicit the sequence $\tbtau = (\tak{0},\ldots,\tak{D-1})$ for which ratio $\frac{c}{T}$ is maximum. We will then discuss the way of choosing $\rho$ and $r$.

\begin{proposition}
The value $\tT$ is the solution to the following equation with unknown $T$:
\begin{equation}\label{eq:flo-topo-equation-en-T-finale}
e^T = \left((m+1)T+1\right)^{\frac{1}{m+1}} \pprod_{k=1}^{D-1} \left(\frac{mT+1}{(m-\uak)T+1}\right)^{\frac{1}{\uak}}.
\end{equation}
Moreover, the optimal sequence $\tbtau = (\tak{0},\ldots,\tak{D-1})$ is defined by
$$
\tak{0} = \frac{1}{m+1}\ln\left((m+1)\tT+1\right),
$$
and for $k \in \{1,\ldots,D-1\}$,
$$
\tak{k} = \frac{1}{\uak}\ln\left(\frac{m\tT+1}{(m-\uak)\tT+1}\right).
$$
\end{proposition}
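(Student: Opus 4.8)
The plan is to regard $\tbtau$ as a maximizer of the smooth function $g = c/T$ on the positive orthant and to read off the claimed formulas from the first-order optimality condition. First I would check that any maximizer lies in the interior with all components strictly positive. Indeed, on any boundary face $\{\tau_k = 0\}$ one has $c_k(\tau_k) = 0$, hence $c(\btau) = 0$ and $g = 0$; moreover $g \to 0$ as $\|\btau\| \to +\infty$ (as already noted in the excerpt) and, for $D \ge 2$, $g \to 0$ as $\btau \to 0$ since each factor $c_l$ vanishes to first order there. As $g$ is continuous and strictly positive at every interior point, its maximum value $\tc/\tT$ is therefore attained at an interior critical point, where $\nabla g(\tbtau) = 0$.

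Next I would exploit the product structure of $c$ to decouple and linearize the critical-point equations. Because $\nabla T = (1,\ldots,1)$, the relation $\nabla g = (T\nabla c - c\nabla T)/T^2 = 0$ reduces coordinatewise to $\tT\,\partial_{\tau_k} c(\tbtau) = c(\tbtau)$. Writing $c = \prod_l c_l(\tau_l)$ gives $\partial_{\tau_k} c / c = c_k'(\tak{k})/c_k(\tak{k})$, so the whole system collapses to the decoupled scalar equations
$$
\frac{c_k'(\tak{k})}{c_k(\tak{k})} = \frac{1}{\tT}, \qquad k \in \{0,\ldots,D-1\};
$$
that is, at the optimum every logarithmic derivative $c_k'/c_k$ takes the common value $1/\tT$.

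Then I would solve each scalar equation in closed form. For $k = 0$, a direct computation gives $c_0'(\tau) = e^{-(m+1)\tau}$, so the equation reads $(m+1)e^{-(m+1)\tak{0}} = (1 - e^{-(m+1)\tak{0}})/\tT$; setting $u = e^{-(m+1)\tak{0}}$ makes this linear in $u$ and yields $e^{(m+1)\tak{0}} = (m+1)\tT + 1$, i.e. the stated formula for $\tak{0}$. For $k \ge 1$, writing $c_k(\tau) = e^{-m\tau}(e^{\uak\tau}-1)$ gives $c_k'(\tau) = e^{-m\tau}\bigl[(\uak - m)e^{\uak\tau} + m\bigr]$, and after the substitution $w = e^{\uak\tak{k}}$ the equation is again linear in $w$ and solves to $e^{\uak\tak{k}} = (m\tT + 1)/((m-\uak)\tT + 1)$, which is the stated formula for $\tak{k}$.

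Finally I would recover the equation for $\tT$ by imposing the defining constraint $\tT = T(\tbtau) = \sum_{k=0}^{D-1}\tak{k}$. Exponentiating this identity turns the sum into the product $e^{\tT} = e^{\tak{0}}\prod_{k=1}^{D-1} e^{\tak{k}}$, and substituting the closed forms for $e^{\tak{0}}$ and $e^{\tak{k}}$ obtained above produces exactly equation~(\ref{eq:flo-topo-equation-en-T-finale}). I expect the only genuinely delicate point to be the interiority argument of the first step, namely excluding boundary maximizers so that the unconstrained Euler equations are legitimately applicable; once that is settled, the remaining steps are elementary, the key simplification being that each scalar optimality equation is linear in the exponential of the corresponding $\tak{k}$.
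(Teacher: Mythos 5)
Your derivation of the first-order conditions, the closed forms for $\tak{0}$ and $\tak{k}$, and the recovery of the equation for $\tT$ by exponentiating $\tT=\sum_k\tak{k}$ is exactly the paper's computation, and your interiority argument (boundary faces give $c=0$, the ratio vanishes at infinity and, for $D\ge 2$, at the origin) is actually spelled out more carefully than in the paper, which disposes of this point in one sentence.

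There is, however, one piece of the proposition your proof does not address: the claim that $\tT$ is \emph{the} solution of equation~(\ref{eq:flo-topo-equation-en-T-finale}), i.e.\ that this equation determines $\tT$ uniquely among positive reals. Your argument shows that the maximizer satisfies the equation, but without uniqueness the equation does not characterize $\tT$, and the companion formulas for $\tak{k}$ (which are expressed in terms of $\tT$) would be ambiguous. The paper closes this gap by rewriting the equation in the additive form
$$
T \;=\; \frac{1}{m+1}\ln\bigl((m+1)T+1\bigr) \;+\; \sum_{k=1}^{D-1}\frac{1}{\uak}\ln\!\left(\frac{mT+1}{(m-\uak)T+1}\right)
$$
and observing that the right-hand side is concave in $T$, vanishes at $T=0$ with slope $1+\sum_{k=1}^{D-1} m/\uak>1$, and grows only logarithmically at infinity, so it meets the identity exactly once on $(0,+\infty)$. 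You would need to add this (or an equivalent) monotonicity/concavity argument; note also that it is precisely where the restriction $D\ge 2$ you invoked for interiority reappears, since for $D=1$ the slope at the origin is $1$ and the only solution is the trivial one $T=0$.
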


\begin{proof}
Let $k \in \zeroDmun$. We have
\begin{eqnarray*}
\deriv{\frac{c}{T}}{\tau_k} &=& \frac{\deriv{c}{\tau_k} \cdot T - c}{T^2} 
= \frac{ \left( \deriv{c_k}{\tau_k}(\tau_k) T - c_k(\tau_k) \right) \pprod_{h=0,h\neq k}^{D-1} c_h(\tau_h) }{T^2}.
\end{eqnarray*}
Consequently, when for all $h \in \zeroDmun$, $\tau_h >0$, we have
$$
\deriv{\frac{c}{T}}{\tau_k} = 0 \;\; \iff \;\;
\deriv{c_k}{\tau_k}(\tau_k) T - c_k(\tau_k) = 0.
$$
The calculus gives, for $k\ge 1$,
\begin{eqnarray*}
\deriv{c_k}{\tau_k}(\tau_k) &=& -m e^{-m\tau_k} (e^{\uak \tau_k} -1 ) + \uak e^{-m\tau_k} e^{\uak \tau_k} \\
&=& e^{-m\tau_k} (m-(m-\uak) e^{\uak \tau_k} ).
\end{eqnarray*}
Thus, 
\begin{equation}\label{eq:flo-topo-tauk}
\begin{array}{lll}
\deriv{\frac{c}{T}}{\tau_k} = 0 
&\Leftrightarrow& (m-(m-\uak) e^{\uak \tau_k} ) T = e^{\uak \tau_k} -1\\
&\Leftrightarrow& mT+1 = e^{\uak \tau_k}(1+(m-\uak)T) \\
&\Leftrightarrow& \tau_k = \frac{1}{\uak}\ln\left(\frac{mT+1}{(m-\uak)T+1}\right).
\end{array}
\end{equation}
In a similar way, for $k=0$,
\begin{eqnarray*}
\deriv{c_0}{\tau_0}(\tau_0) &=& e^{-(m+1)\tau_0}.
\end{eqnarray*}
Thus, 
\begin{equation}\label{eq:flo-topo-tau0}
\begin{array}{lll}
\deriv{\frac{c}{T}}{\tau_0} = 0 &\Leftrightarrow& e^{-(m+1)\tau_0} T = \frac{1}{m+1}(1-e^{-(m+1) \tau_0}) \\
&\Leftrightarrow& e^{-(m+1)\tau_0}((m+1)T+1) = 1\\
&\Leftrightarrow& \tau_0 = \frac{1}{m+1}\ln((m+1)T+1).
\end{array}
\end{equation}
Using the definition of $T$, for $k \in \zeroDmun$, $\deriv{\frac{c}{T}}{\tau_k} = 0$ if and only if
\begin{equation}\label{eq:flo-topo-equation-en-T}
T = \frac{1}{m+1}\ln((m+1)T+1) + \ssum_{k=1}^{D-1} \frac{1}{\uak}\ln\left(\frac{mT+1}{(m-\uak)T+1}\right),
\end{equation}
on the one hand, and on the other hand the $\tau_k$ are defined in function of $T$ according to the equality just obtained (equations~(\ref{eq:flo-topo-tauk}) and~(\ref{eq:flo-topo-tau0})). Equation~(\ref{eq:flo-topo-equation-en-T}) rewrites to
$$
e^T = ((m+1)T+1)^{\frac{1}{m+1}} \pprod_{k=1}^{D-1} \left(\frac{mT+1}{(m-\uak)T+1}\right)^{\frac{1}{\uak}},
$$
which corresponds to equation~(\ref{eq:flo-topo-equation-en-T-finale}) of the proposition. It remains to show that the equation has a unique solution but the trivial solution $0$. We show this for the equivalent equation (\ref{eq:flo-topo-equation-en-T}). The right-hand term is null if $T=0$. Computing the second derivative of this term shows it is concave in the variable $T$. Moreover, this term is equivalent to $(1 + \sum_{k=1}^{D-1} \frac{m}{\uak})T$ when $T$ approaches $0$, whose increase is strictly faster than the identity function. Finally, this term is equivalent to $\frac{\ln((m+1)T)}{m+1}$ when $T$ goes to $+\infty$, whose increase is strictly slower than the identity function. Consequently, this term has exactly two intersections with the identity function: one for $T=0$ and the other for some $T>0$, which is $\tT$. Ratio $\frac{c}{T}$ is non-negative, and it is null if $\tau_k = 0$ and when $\tau_k \rightarrow +\infty$. Consequently, the obtained optimum is necessarily the expected maximum.
\end{proof}

Now, assume that only the disturbance $\rho$ is set. One possibility to find $r$ giving the maximal ratio $\frac{\tc}{\tT}$ is to test all nodes in $\NN$. Alternatively, we may expect that a tree with minimal depth allows for a faster propagation and yields the maximal value $\frac{\tc}{\tT}$.

Finally, in order to choose $\rho$, a finite number of tests of distinct $\rho$ values suffice to obtain the best. These values corresponds to the distinct values of the $\left| \|x_i^0 - x_k^0\| - \|x_i^0 - x_j^0\| \right|$ for  $i,j,k \in \NN$ distinct resulting in a connected graph $H_\rho$.

\subsection{Simulations}

In this section, we carry out simulations to illustrate Theorem~\ref{th:flocking-topo-main}.

Consider 4 agents moving according to system (\ref{sys:alignement-en-vitesse-topo}) where each agent is influenced by $m=1$ agent. Initially, regarding the positions, we have $x^0_1 = (0,2.7)$, $x^0_2 = (0,2.5)$, $x^0_3 = (0,1.5)$, $x^0_4 = (0,0)$ and regarding the velocities, $w^0_1 = (0,1)$, $w^0_2 = (0,1)$, $w^0_3 = (0,-1)$ and $w^0_4 = (0,1)$. We then choose $v_i^0 = \alpha w_i^0+(c,0)$ where $c$ is a constant parameter which does not influence the velocity alignment and is only used for visualization purpose and $\alpha$ is a parameter used to modify the initial velocity diameter. Such a configuration gives the following initial interaction graph matrix: $(0,1,0,0;1,0,0,0;0,1,0,0;0,0,1,0)$, written \textit{\`a la} Matlab.
Consequently, the subgraph $H_\rho$ having a spanning tree in the initial interaction graph is the initial graph itself. The maximum robustness $\rho$ allowing to preserve $G_{x^0}$ (\ie such that $H_\rho = G_{x^0}$) is half of the distance between agents $2$ and $3$ : $\rho = 0.25$. Regarding the hierarchical structure corresponding to $G_{x^0}$, we have $D = 2$ the depth of the tree, root $r = 2$, $\SS_0 = \{2\}$, $\SS_1 = \{1,3\}$ and $\SS_2 = \{4\}$. These sets are associated to flow $\alpha_1 = 1$. We obtain the bound
$
\frac{\tilde{c}}{\tilde{T}} \rho = 0.0351,
$
using comments from section~\ref{sec:flocking-topo-optimisation-de-la-borne}. Figure~\ref{fig:flocking-topo-4-agents} presents the simulation results for various values of the initial velocity diameter (modified using parameter $\alpha$). When the initial diameter is equal to the bound $\frac{\tilde{c}}{\tilde{T}} \rho$, the interaction graph preserves the spanning tree (and here $G_{x^0}$) through time, as guaranteed by Theorem~\ref{th:flocking-topo-main}. On the opposite, when the initial velocity diameter is too large (\eg $\Diam(0) = 13\frac{\tilde{c}}{\tilde{T}} \rho$), agent $3$ approaches agent $4$ so that distance between $3$ and $4$ becomes smaller than the one between $3$ and $2$. As a consequence, the interaction graph gets disconnected and the velocity alignment can never be reached. On the other hand, notice that when $\Diam(0) = 10\frac{\tilde{c}}{\tilde{T}} \rho$, case where Theorem~\ref{th:flocking-topo-main} does not allow to conclude, the group converges toward velocity alignment. This illustrates the fact that Theorem~\ref{th:flocking-topo-main} only provides a sufficient condition but not a necessary one.

\begin{figure}[!htbp]
\begin{center}
\includegraphics[scale=0.35,trim=4cm 9cm 4cm 9cm,keepaspectratio]{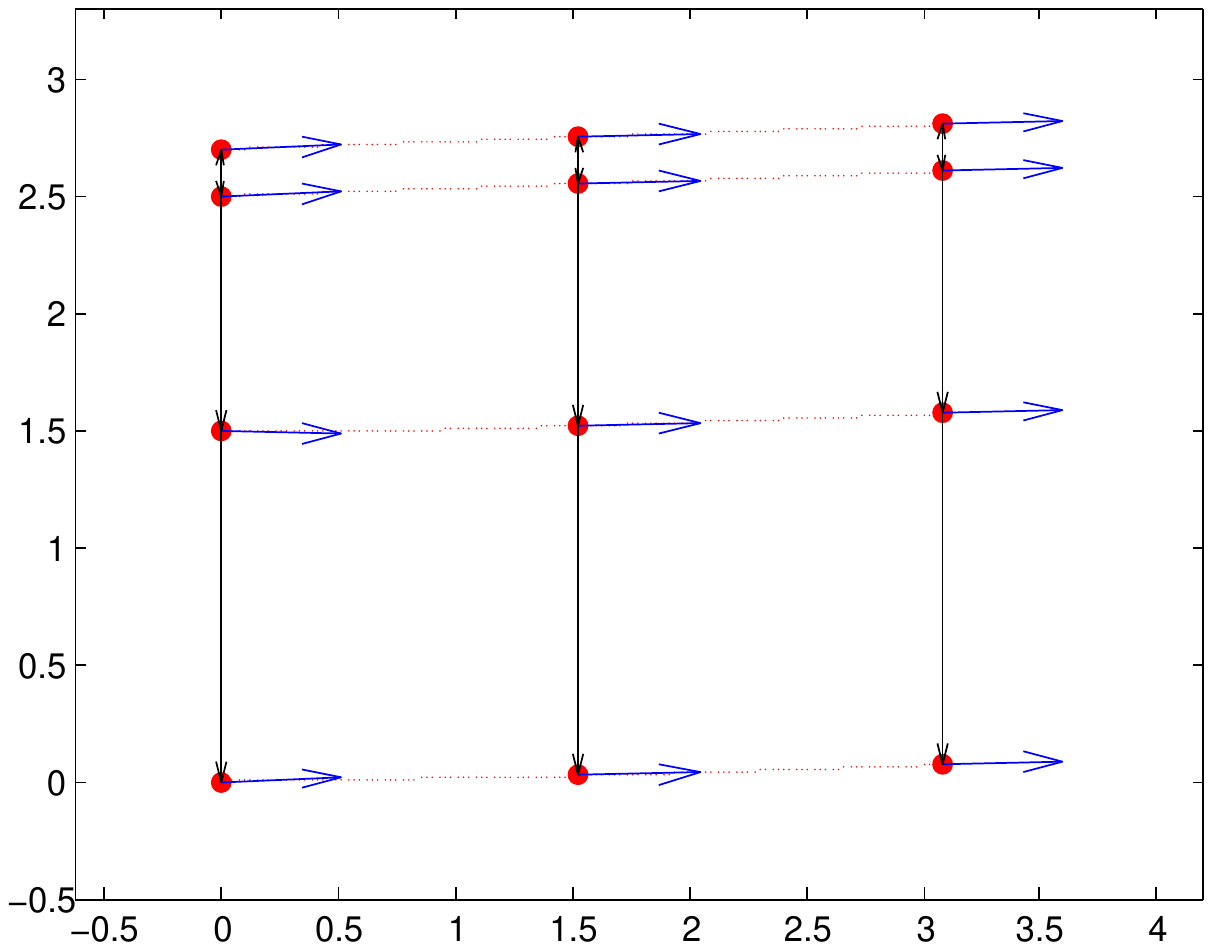}
\includegraphics[scale=0.35,trim=4cm 9cm 4cm 9cm,keepaspectratio]{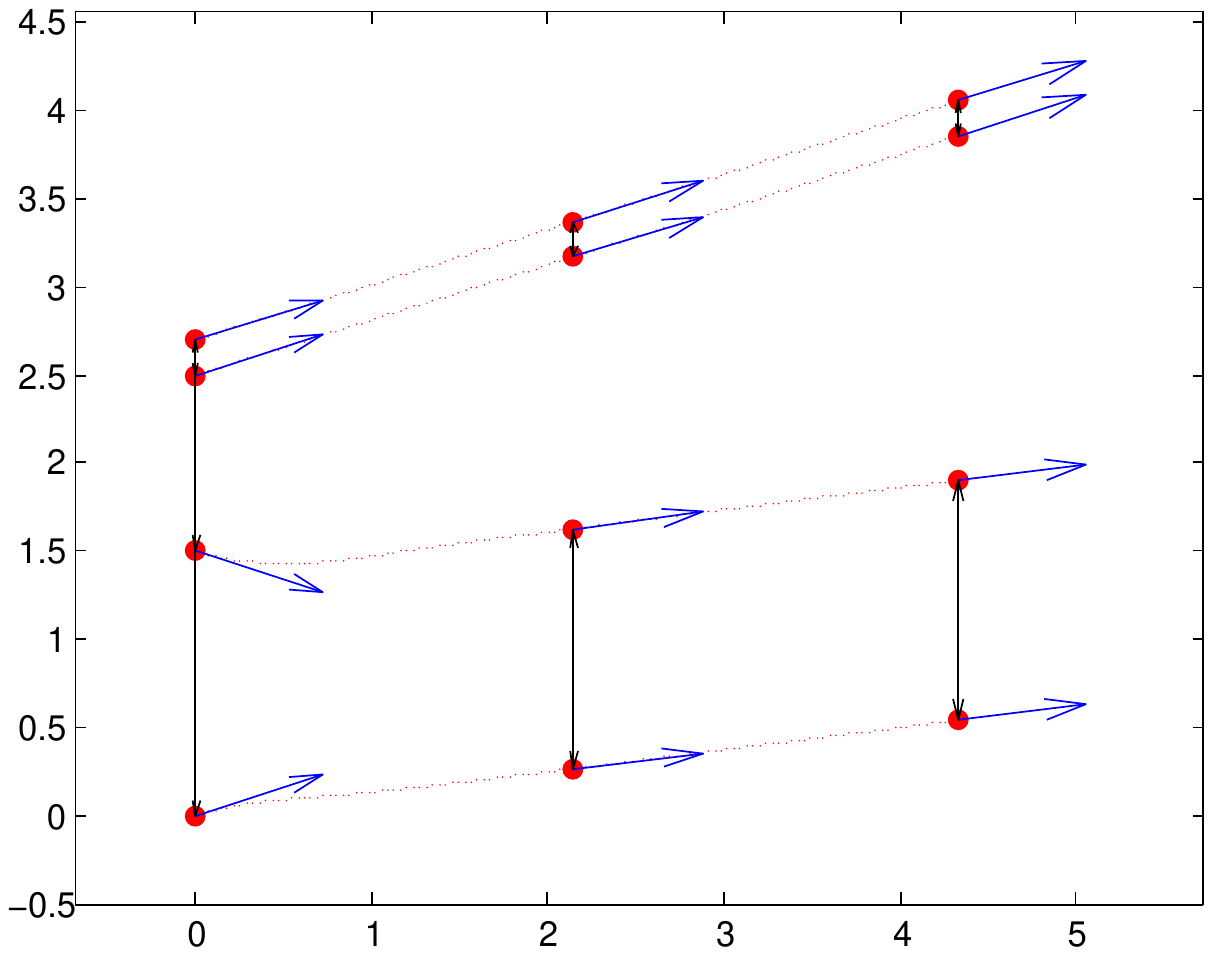}\\
\includegraphics[scale=0.35,trim=4cm 9cm 4cm 9cm,keepaspectratio]{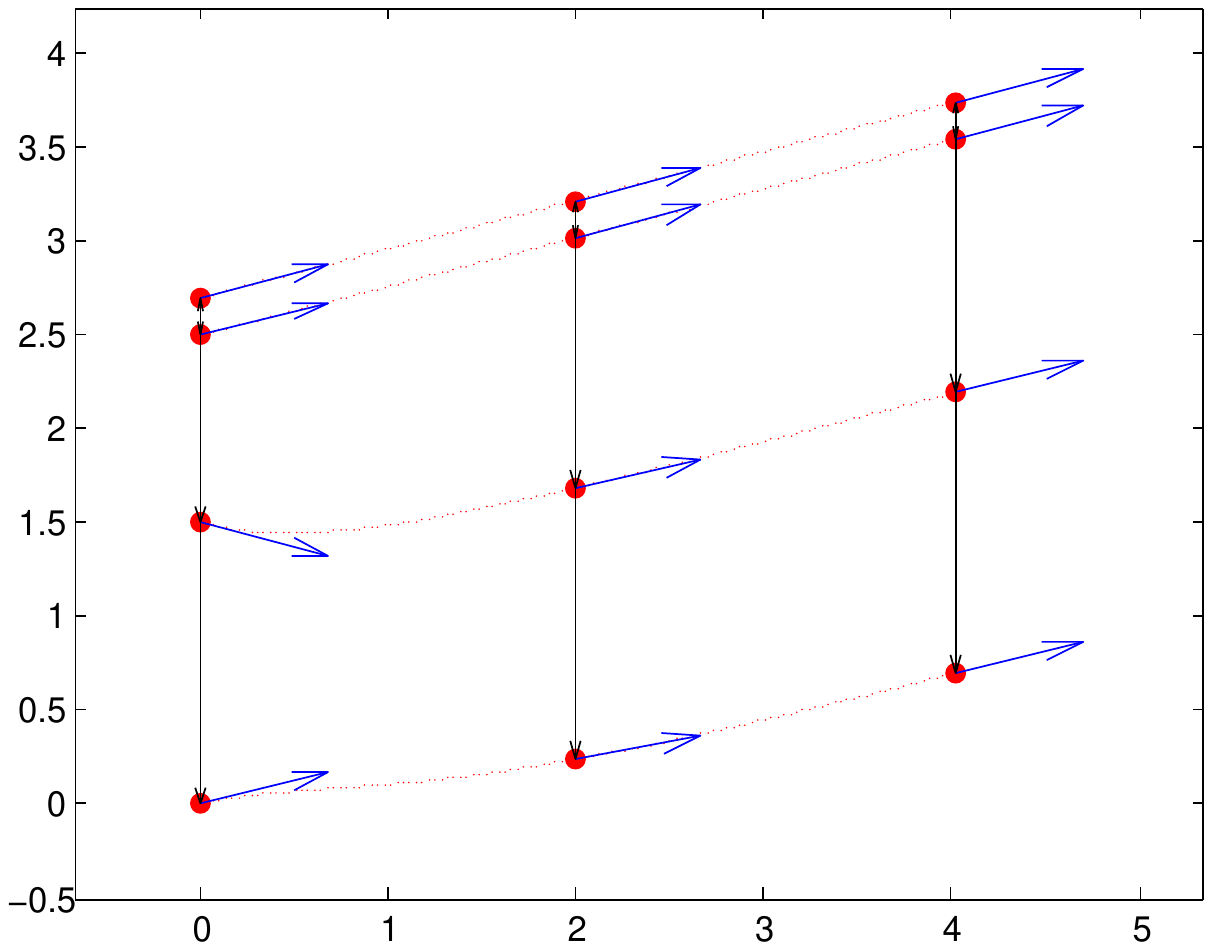}
\end{center}
\caption{\label{fig:flocking-topo-4-agents} The top-left figure shows an instance of the trajectory when $\Diam(0) = \frac{\tilde{c}}{\tilde{T}} \rho$. The top-right figure is an instance of the trajectory when $\Diam(0) = 13\frac{\tilde{c}}{\tilde{T}} \rho$ and the bottom figure corresponds to $\Diam(0) = 10\frac{\tilde{c}}{\tilde{T}} \rho$. The figure shows the trajectories of the $4$ agents as well as their positions, velocities and interactions for $3$ given times. The red dots correspond to the agents' positions. The blue arrows are the agents' velocities. The black arrows between agents represent the interactions.}
\end{figure}

\section{Conclusion}

In this paper, we have considered a multi-agent system consisting of mobile agents with second-order dynamics, where the communication network is determined by the so-called topological rule. Our approach adapts ideas from Martin and Girard~\cite{Martin2010}, Moreau~\cite{Moreau2004} and Angeli and Bliman~\cite{Angeli2009}. It links the preservation of a hierarchical structure in the communication network to the speed of convergence towards consensus. 

We have established a sufficient condition for velocity alignment depending on the initial positions and velocities of the agents only. Our main contribution has been to provide a new convergence rate toward consensus valid for all time for the continuous-time consensus system. Our main theoretical result states that flocking occurs whenever the initial velocity diameter is smaller than a threshold (which is a function of the robustness of some subgraph of the initial interaction graph). This result allowed us to derived practical bounds for flocking. Finally, we have illustrated the validity of our approach through simulations. The main interest of our approach is the possibility of ensuring flocking a priori. The condition can be easily verified through numerical computation.

For future work, we plan to improve the tightness of the bound by taking into account two facts:
we will relate velocities with positions because two agents with opposite velocities have more chance to agree on their velocities if they point toward each other, than if they point away from each other. Also, a subgroup of agents with high density is intuitively more inclined to agree on their velocities than
a subgroup of low connectivity. Thus, agents belonging to a highly connected local neighborhood should be allowed
higher initial velocities (see for instance~\cite{FiacchiniMorarescu2012}).
Finally, our general theoretical result providing a convergence rate toward consensus may be applied to other non-symmetric communication rules.

\bibliographystyle{alpha}
{\small \bibliography{references}}



\end{document}